\numberwithin{equation}{section}
\newcommand{\norm}[1]{\left\lVert#1\right\rVert}
\newcommand{\ve}{\varepsilon}
\newcommand{\rarrow}{\rightarrow}
\newcommand{\dd}{\mathrm{d}} 
\newcommand{\N}{\mathbb{N}}
\newcommand{\R}{\mathbb{R}}
\newcommand{\mrm}{\mathrm}
\newtheorem{theorem}{Theorem}[section]
\newtheorem{proposition}[theorem]{Proposition}
\newtheorem{lemma}[theorem]{Lemma}
\newtheorem{definition}[theorem]{Definition}
\newtheorem{thmx}{Theorem} 
\newtheorem{remark}[theorem]{Remark}
\def\author#1{\gdef\autrun{\def\and{\unskip, }#1}\gdef\@author{#1}}
\newcommand{\Addresses}{{
  \bigskip
  \footnotesize

 Y. de Jesus, \textsc{Université du Luxembourg, Esh-sur-Alzette, Luxembourg.} \par\nopagebreak
  \textit{E-mail}: \texttt{ygor.dejesus@uni.lu}

M. Espitia, \textsc{Universidad Pontificia Bolivariana, Monteria, Colombia.} \par\nopagebreak
  \textit{E-mail}: \texttt{maresno@gmail.com}

G. Ponce, \textsc{Departamento de Matem\'atica, IMECC-UNICAMP Campinas-SP, Brazil.} \par\nopagebreak
  \textit{E-mail}: \texttt{gaponce@unicamp.br}

}}
\begin{document}

\title{Homoclinic classes for flows: ergodicity and SRB measures}

\author{Ygor de Jesus\footnote{Ygor de Jesus was financially supported by Fundação de Amparo à Pesquisa do Estado de São Paulo (FAPESP - Brazil) through process number 2021/02913-0 and 2023/05100-5.}, Marcielis Espitia, Gabriel Ponce}
  

\date{}


\maketitle


\begin{abstract}
In this work we study homoclinic classes for some classes of flows, extending to flows certain results previously obtained, by other authors, for diffeomorphisms. More precisely, we prove that given a conservative $C^2$ flow $\varphi_t:M\to M$ over a smooth compact Riemannian manifold $M$, if both the stable and unstable homoclinic classes of a certain periodic hyperbolic orbit $\gamma$ have positive Lesbegue measure, then their intersection is an ergodic component of the flow. The same results are valid for invariant SRB measures.
\end{abstract}

\setcounter{tocdepth}{2}
\tableofcontents

\section{Introduction}

One of the most powerful tools developed in the theory of dynamical systems during the last century is now known as the "Hopf Argument." This classical argument is a cornerstone in understanding the \textit{ergodicity} of systems exhibiting hyperbolicity and combines geometric structures, such as the existence of stable and unstable foliations, with global statistical properties of the system. The classical definition of an \textit{ergodic measure} states that an ergodic system cannot be decomposed into two ``meaningful" subsystems that do not interact, that is, any invariant set must have either full measure or zero measure. Among the several equivalent definitions of ergodicity, one particularly relevant to understanding the intuition behind the \textit{Hopf Argument} concerns the behavior of the well-known \textit{Birkhoff Averages} of continuous functions, which must be almost everywhere constant in ergodic systems.

Initially, Hopf's argument applied to systems with a property known as \textit{uniform hyperbolicity}, characterized by the existence of complementary stable and unstable directions in which the system contracts and expands uniformly, respectively. Classical examples of such systems include the famous \textit{Arnold's Cat Map}, defined by the action of the matrix $A=\left(\begin{array}{cc}
    2& 1 \\
    1 &1 
\end{array}\right)$ on $\mathbb{T}^2$ and the \textit{geodesic flow} associated with negatively curved Riemannian metrics. Other forms of hyperbolicity also arise naturally in various systems. In this article, we focus on \textit{non-uniformly hyperbolic flows}, and consequently we rely on several results of Pesin's Theory. 
Understanding ergodicity for these systems often involves decomposing the system into fundamental pieces where detailed analysis can be carried out. Identifying these pieces is a challenging problem. While one might expect a version of the Hopf Argument to apply within these ``fundamental pieces'',  extending Hopf's ideas to systems lacking uniform rates of contraction or expansion is neither straightforward nor trivial.

In \cite{HHTUcriteria}, the authors introduced the notion of \textit{ergodic homoclinic classes} for diffeomorphisms, providing an elegant description of the potential ``fundamental pieces'', namely, the ergodic components of the diffeomorphism. Inspired by their work, we study an analogous decomposition for flows, i.e., the action on a manifold induced by flows generated by certain vector fields and extend their results. The description of homoclinic classes is as follows (see Section \ref{preliminaries} for details): consider the flow $\varphi_t: M\rightarrow M$ generated by a vector field $X$ and $\gamma$ a closed periodic hyperbolic orbit for $\varphi_t$, i.e. there exists $T>0$ such that $\varphi_T(p)=p$ for all $p\in \gamma$ and a decomposition $T_\gamma M=E^s\oplus \mathbb{R}X\oplus E^u$, with $E^s$ contracted by $D\varphi_t$ and $E^u$ expanded by $D\varphi_t$. In this context we can consider the stable and unstable Pesin's manifolds of the orbit $\gamma$, denoted by $W^\tau(\gamma)$ $\tau=s,u$. Then, the stable (resp. unstable) homoclinic class of $\gamma$, which we denote by $\Lambda^s(\gamma)$ (resp. $\Lambda^s(\gamma)$), is the set of points $x$ with well defined stable (resp. unstable) Pesin's manifold and such that $W^s(x)$ (resp. $W^u(x)$) transversely intersects $W^u(\gamma)$ (resp. $W^s(\gamma)$). All along this paper $M$ will denote a smooth closed connected Riemannian manifold, that is, it is a boundaryless smooth compact Riemannian manifold.
We obtain the following result which is an extension of \cite[Theorem A]{HHTUcriteria}:

\begin{thmx}\label{theo:A}
\label{criteria for ergodicity}
Let $\varphi_t:M \to M$ be a $C^{2}$-flow and let $m$ be a smooth $\varphi_t$-invariant measure. If, for a certain hyperbolic non-singular periodic orbit $\gamma$, we have $m(\Lambda^s(\gamma))\cdot m(\Lambda^u(\gamma))>0$, then:
\begin{itemize}
\item[i)] $\Lambda(\gamma) \circeq \Lambda^u(\gamma) \circeq \Lambda^s(\gamma)$,
\item[ii)] $\Lambda(\gamma)$ is a hyperbolic ergodic component for $\varphi_t$.
\end{itemize}
\end{thmx}

\begin{remark}
\label{orbit not singular}
    Observe that if $\gamma$ is a singular periodic orbit then $\text{dim}(W^s(\gamma)) + \text{dim}(W^u(\gamma))-1 = \text{dim}(M) $, consequently, if $x\in \Lambda^s(\gamma)$ the transverse intersection of $W^s(x)$ and $W^u(\gamma)$ yields $\text{dim}(W^s(x))\geq \text{dim}(M) - \text{dim}(W^u(\gamma))$. Analogously for $x\in \Lambda^u(\gamma)$. Consequently, if $x\in \Lambda^s(\gamma)\cap \Lambda^u(\gamma)$ then $\text{dim}(W^s(x))+\text{dim}(W^u(x)) = \text{dim}(M)$ which implies that $\{x\}$ is a singular orbit as well. In this case, even if $\Lambda(\gamma)$ has positive measure, it is not an ergodic component since $\varphi_t |\Lambda(\gamma)  = \text{Id}|\Lambda(\gamma)$.
\end{remark}

An essential property of smooth measures for the proof of the previous theorem is \textit{Absolute Continuity} along Pesin manifolds (check Definition \ref{absolute continuity}). This property is also satisfied by the so called \textit{SRB measures} introduced by Sinai, Ruelle and Bowen in the 70s. \textit{SRB measures} are known as the invariant measures that are most compatible with a volume measure for non conservative systems and plays an important role in the general ergodic theory. With a few extra observations on the proof of Theorem \ref{theo:A} we show that the same result is valid for general invariant SRB measures.

\begin{thmx}
\label{homoclinic class for SRB}
    Let $\varphi_t: M\rightarrow M$ be a $C^2$-flow and $\mu$ a SRB measure. If either $\mu(\Lambda^s(\gamma))\cdot \mu(\Lambda^u(\gamma))>0$ or $m(\Lambda^s(\gamma))\cdot \mu(\Lambda^u(\gamma))>0$ for certain hyperbolic non-singular orbit $\gamma$, then $\Lambda^u(\gamma)\subset \Lambda^s(\gamma)$ $\mod 0$ (for $\mu)$. In addition, $\mu$ restricted to $\Lambda(\gamma)$ is ergodic and hyperbolic physical measure.
\end{thmx}

\noindent The next results deal with the existence of a homoclinic class related to a SRB measure in some cases and also provide us a criterion for obtaining uniquiness of SRB measures from homoclinic classes.

\begin{thmx}
\label{SRB ergodic components has full measure homoclinic class}
    Let $\varphi_t: M\rightarrow M$ be a $C^2$-flow over a compact manifold and $\mu$ a regular, hyperbolic and SRB invariant measure. Then for every ergodic component $\nu$ of $\mu$ there exists a hyperbolic closed orbit $\gamma$ such that $\nu(\Lambda(\gamma))=1$.
\end{thmx}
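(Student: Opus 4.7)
The plan is to reduce to an ergodic $\nu$ and then produce a single hyperbolic periodic orbit $\gamma$ for which $\nu(\Lambda^s(\gamma))$ and $\nu(\Lambda^u(\gamma))$ are both positive; once this is achieved, Theorem \ref{homoclinic class for SRB} applied to $\nu$ yields that $\nu|_{\Lambda(\gamma)}$ is ergodic, and since $\nu$ itself is ergodic and $\Lambda(\gamma)$ is $\varphi_t$-invariant, this forces $\nu(\Lambda(\gamma))=1$. The reduction step is standard: by the ergodic decomposition theorem almost every ergodic component of a regular, hyperbolic, SRB measure inherits these three properties, the SRB property passing to components through the uniqueness of the disintegration along unstable Pesin manifolds, and regularity/hyperbolicity simply by the measurability of the Oseledets data.

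To produce $\gamma$, I would fix a Pesin block $\Lambda_\ell$ with $\nu(\Lambda_\ell)>0$ on which the local stable and unstable Pesin manifolds have uniform minimal size $\delta_\ell>0$, depend continuously on the base point, and are uniformly transverse to each other and to the flow direction $\mathbb{R}X$. Choose a density point $x_0\in\Lambda_\ell$ of $\nu|_{\Lambda_\ell}$ and apply the flow version of Katok's closing lemma: for every small $\ve>0$ it produces a hyperbolic periodic orbit $\gamma$ that $\ve$-shadows a long piece of the orbit of $x_0$ and whose hyperbolic splitting is $\ve$-close to the Oseledets splitting at $x_0$. For $\ve$ small compared to $\delta_\ell$ this gives transverse intersections $W^s_{loc}(\gamma)\pitchfork W^u_{loc}(x_0)$ and $W^u_{loc}(\gamma)\pitchfork W^s_{loc}(x_0)$, and by continuous dependence of the Pesin manifolds on $\Lambda_\ell$ the same transversality persists for every $x$ in a relatively open neighborhood $U$ of $x_0$ inside $\Lambda_\ell$. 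Consequently $U\cap\Lambda_\ell$ is contained in $\Lambda^s(\gamma)\cap\Lambda^u(\gamma)$, and the density-point property gives $\nu(U\cap\Lambda_\ell)>0$, so both $\nu(\Lambda^s(\gamma))>0$ and $\nu(\Lambda^u(\gamma))>0$, which is precisely the input required to invoke Theorem \ref{homoclinic class for SRB}.

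The main obstacle is the Katok-style closing step. In the flow context one has to control the zero Lyapunov exponent along $X$ and the time reparametrization intrinsic to closing lemmas for flows, so that the hyperbolic splitting of $\gamma$ stays genuinely transverse to the Oseledets splitting at $x_0$ after quotienting by $\mathbb{R}X$; one must also verify that the periodic orbit produced by the closing procedure lies in the uniform Pesin neighborhood where local manifolds vary continuously. This is the main place where the diffeomorphism argument of \cite{HHTUsrb} needs substantive adaptation; once this shadowing/closing step is in place, the remainder is a direct translation of the density-point and absolute-continuity argument, combined with the criterion already proved in Theorem \ref{homoclinic class for SRB}.
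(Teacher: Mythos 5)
Your proof is correct and follows the same overall strategy as the paper: reduce to an ergodic component, apply a Katok-type closing-lemma argument to produce a hyperbolic periodic orbit $\gamma$, and conclude by invariance of $\Lambda(\gamma)$. There are a few presentational differences worth noting. For the reduction lemma the paper argues via the entropy characterization of SRB (Pesin's formula plus the Margulis--Ruelle inequality and Jacobs' theorem), while you appeal to the uniqueness of the conditional disintegration along unstable Pesin manifolds; both are standard and both work. For the main step, the paper invokes its Theorem~\ref{Katok existence of periodic orbit} as a black box, which already gives that $\nu$ is homoclinically related to $\gamma$ and hence that $\nu$-a.e.\ $x$ lies in $\Lambda(\gamma)$ directly, whereas you unpack the closing-lemma mechanism at a density point in a Pesin block, obtain only $\nu(\Lambda(\gamma))>0$, and then bootstrap to full measure via ergodicity of $\nu$ and invariance of $\Lambda(\gamma)$. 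This is perfectly sound and arguably more self-contained, though it re-derives part of what the cited Katok theorem already supplies. One small remark: the invocation of Theorem~\ref{homoclinic class for SRB} in your argument is superfluous. As you yourself observe in the same sentence, once $\nu(\Lambda(\gamma))>0$ is established, the ergodicity of $\nu$ together with the $\varphi_t$-invariance of $\Lambda(\gamma)$ already forces $\nu(\Lambda(\gamma))=1$; you never actually use the conclusion that $\nu|_{\Lambda(\gamma)}$ is ergodic, so that step can be removed without loss.
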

\begin{thmx}
\label{SRB measures with full measure homoclinic class are equal}
    Let $\varphi_t: M\rightarrow M$ be a $C^2$-flow over a compact manifold and $\mu$ and $\nu$ two ergodic SRB measures. Suppose that there exist a periodic hyperbolic orbit $\gamma$ such that $\mu(\Lambda(\gamma))=\nu(\Lambda(\gamma))=1$. Then, $\mu=\nu$.
\end{thmx}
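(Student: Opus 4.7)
The plan is to reach a contradiction by assuming $\mu \neq \nu$ and exhibiting a point in $B(\mu) \cap B(\nu)$, where $B(\cdot)$ denotes the basin of a physical measure. Since $\mu$ and $\nu$ are ergodic SRB measures with $\mu(\Lambda(\gamma))=\nu(\Lambda(\gamma))=1$, Theorem \ref{homoclinic class for SRB} applied to each of them yields that $\mu$ and $\nu$ are physical, and the basins of two distinct ergodic physical measures are disjoint: a single point of $M$ cannot realize two different asymptotic Birkhoff distributions. Thus any non-empty intersection of $B(\mu)$ and $B(\nu)$ already forces $\mu=\nu$.

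Next I would exploit the homoclinic structure. Pick a $\mu$-generic point $x \in \Lambda(\gamma)$ and a $\nu$-generic point $y \in \Lambda(\gamma)$, both of which exist by the full-measure hypothesis. The SRB property implies that Lebesgue-almost every point of $W^u_{\mathrm{loc}}(x)$ lies in $B(\mu)$, and likewise for $W^u_{\mathrm{loc}}(y)$ and $B(\nu)$. Because $x \in \Lambda^u(\gamma)$, the unstable manifold $W^u(x)$ meets $W^s(\gamma)$ transversely; the $\lambda$-lemma (inclination lemma) for flows then forces $\varphi_t(W^u_{\mathrm{loc}}(x))$ to accumulate on $W^u(\gamma)$ in the $C^1$ topology on compact sets as $t\to\infty$, and the same holds for $W^u_{\mathrm{loc}}(y)$. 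Fixing a point $p\in W^u(\gamma)\setminus\gamma$ and a small neighborhood $U$ of $p$ in which the strong stable foliation $\mathcal{F}^s$ of $\gamma$ is defined, smooth, and transverse to $W^u(\gamma)$, I can choose $t_1,t_2$ large enough that disks $D_x\subset \varphi_{t_1}(W^u_{\mathrm{loc}}(x))$ and $D_y\subset \varphi_{t_2}(W^u_{\mathrm{loc}}(y))$ sit inside $U$ and are $C^1$-close to a common piece of $W^u(\gamma)$; in particular both are transverse to $\mathcal{F}^s$, and $\varphi_t$-invariance of the SRB property ensures that Lebesgue-a.e.\ point of $D_x$ lies in $B(\mu)$ and Lebesgue-a.e.\ point of $D_y$ lies in $B(\nu)$.

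The contradiction is then produced via stable holonomy. The map $\pi:D_x\to D_y$ obtained by sliding along leaves of $\mathcal{F}^s$ is a $C^1$ diffeomorphism onto its image, hence absolutely continuous with respect to the induced Lebesgue measures. Since two points lying on the same leaf of $\mathcal{F}^s$ have forward orbits that remain exponentially close, they share the same Birkhoff limits for every continuous observable, so $\pi$ carries $B(\mu)\cap D_x$ into $B(\mu)$. Absolute continuity of $\pi$ then gives that $\pi(B(\mu)\cap D_x)\subset B(\mu)\cap D_y$ has positive Lebesgue measure on $D_y$, and since $B(\nu)\cap D_y$ has full Lebesgue measure on $D_y$, the set $B(\mu)\cap B(\nu)\cap D_y$ is non-empty, contradicting the disjointness recorded in the first paragraph.

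The main technical obstacle is the third step: one has to ensure that the local strong stable foliation of $\gamma$ is regular enough for its holonomy to be absolutely continuous and that, after sufficiently many iterations, the Pesin unstable disks $D_x$ and $D_y$ are close enough to $W^u(\gamma)$ to be transverse to $\mathcal{F}^s$ and to project diffeomorphically onto overlapping subsets. The flow setting also requires carefully separating strong-stable from weak-stable behavior and keeping the flow direction — which lies inside $W^u(\gamma)$ — out of the holonomy construction, but this causes no genuine obstruction to transversality in the strong stable sense.
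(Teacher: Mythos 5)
Your overall strategy coincides with the paper's: exhibit a point of $B(\mu)\cap B(\nu)$ by using the SRB property to place Lebesgue-full sets of basin points on unstable plaques of $\mu$- and $\nu$-generic points, and then transfer one onto the other via an absolutely continuous stable holonomy; the disjointness of basins of distinct ergodic measures finishes the job. The first two paragraphs are essentially sound (in fact the appeal to Theorem~\ref{homoclinic class for SRB} is unnecessary: distinct ergodic measures already have disjoint sets of Birkhoff-generic points, physicality plays no role). The gap lies in the holonomy step. You ask for a neighborhood $U$ of a point $p\in W^u(\gamma)\setminus\gamma$ on which ``the strong stable foliation $\mathcal{F}^s$ of $\gamma$ is defined, smooth, and transverse to $W^u(\gamma)$'', and you simultaneously use that the holonomy $\pi$ along $\mathcal{F}^s$ is a $C^1$ diffeomorphism and that two points on the same $\mathcal{F}^s$-leaf have forward orbits that stay exponentially close. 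These two requirements cannot both hold. A hyperbolic periodic orbit does \emph{not} foliate an open tubular neighborhood by genuine asymptotic stable classes: the actual strong stable leaves form the invariant submanifold $W^s(\gamma)$ of dimension $1+\dim E^s$, which is nowhere dense near $p$. One can manufacture a $C^1$ invariant ``fake'' stable foliation near $\gamma$ by graph transform, but its leaves other than those inside $W^s(\gamma)$ are not asymptotic classes; once the forward orbits of two leaf points exit the neighborhood they need not stay close, so Birkhoff limits are not constant along such leaves and $\pi$ does not carry $B(\mu)$ into $B(\mu)$.

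The correct object is the Pesin stable lamination of the regular points inside $\Lambda(\gamma)$, and that is what the paper's proof uses. Every $z$ in the positive-measure subset of $W^u(x)$ you built lies in $\Lambda^s(\gamma)$, so it possesses a genuine Pesin stable manifold $W^s(z)$ whose flow-saturation $W^{ws}(z)$ meets $W^u(\gamma)$ transversally; points on $W^{ws}(z)$ really are forward-asymptotic to $z$, so Birkhoff averages are constant along these leaves. Restricting to a Pesin block (Poincar\'e recurrence guarantees a positive-measure piece survives) makes the resulting weak-stable holonomy from $W^u(x)$ onto $W^u(p)$, $p\in\gamma$, absolutely continuous --- measurable rather than $C^1$, but this is all one needs. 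The paper pushes the two sets $D_x$ and $D_y$ onto $W^u(p)$ separately and shows both images have full conditional Lebesgue measure there, which forces a common point. Replacing your alleged smooth foliation of $\gamma$'s neighborhood by this Pesin weak-stable holonomy onto $W^u(\gamma)$ repairs the argument; once you do so the $\lambda$-lemma iteration becomes unnecessary, because the transversality $W^{ws}(z)\pitchfork W^u(\gamma)$ already supplies the holonomy without having to bring the unstable disks uniformly close to $W^u(\gamma)$.
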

\noindent We highlight that our results do not state conditions for the existence of such measures. 

In general, it is highly non-trivial to obtain known results for diffeomorphisms in the flow setting by two main reasons: perturbations results for diffeomorphisms are not, in general, applicable for flows and in hyperbolic dynamics there is no contraction nor expansion in the flow direction, hence many of the arguments which depend on this kind of structure are not adaptable. In the same direction of our propose we may cite the recent work by  Hasselblatt and Fisher \cite{fisher2022accessibility}, where the authors need to overcome many difficulties to obtain analogous results from \cite{avila2022symplectomorphisms} in the flow setting. The other way around is also tremendously difficult in many cases, as an example we cite the celebrated work by Avila \cite{Avila2010regularization} on the regularization of conservative diffeomorphisms. It is worth mentioning that flows with some kind of non-uniform hyperbolic behavior appear naturally in dynamical systems, in particular in the context of geodesic flows for non-positively curved Riemannian metrics. Therefore, it is extremely important to develop tools to study such systems and it illustrates well the power of the results presented by this article.

The article is structured as follows: in Section \ref{preliminaries} we present the substantial background material we are going to use, such as the precise definitions and results we are going  to need for our development, in Section \ref{section typical points} we prove some essential preliminary results for the proofs of Theorems \ref{criteria for ergodicity} and \ref{homoclinic class for SRB}. The proof of such results are done in Sections \ref{proof of theorem criteria} and \ref{proof ergodicity SRB}, respectively. Finally, Sections \ref{proof SRB ergodic componets has full measure} and \ref{proof SRB measures with full measure homoclinic class are equal} are devoted to the proofs of Theorems \ref{SRB ergodic components has full measure homoclinic class} and \ref{SRB measures with full measure homoclinic class are equal}, respectively. 

\section{Preliminaries}
\label{preliminaries}
As pointed out in the introductory section, $M$ will denote a closed connected Riemannian manifold of dimension $\dim M=n$, $n\geq 3$. Given a $C^1$ vector field on $M$, the $C^1$ flow generated by $X$ will be denoted by  $\varphi^X=\{\varphi^X_{t}\}_{t\in \mathbb{R}}$ (or just $\varphi$ if there is no confusion). 

Let us fix some usual ergodic theory nomenclature and definitions that we are going to use: a set $A\subset M$ is called \textit{$\varphi-$ invariant} if $\varphi_t(A)=A$, for every $t \in \mathbb{R}$. A Borel probability measure $\mu$ is called $\varphi-$invariant if $\mu(\varphi_t(A))=\mu(A)$, for every $t\in \mathbb{R}$ and every Borel set $A$. A $\varphi-$invariant measure is called \textit{ergodic} if $\mu(A)=1$ or $\mu(A)=0$, for every invariant measurable set $A$. Given a $C^1$ vector field $X$ on $M$, an invariant measure $\mu$ is said to be \textit{regular} if $\mu(\text{Sing}(X))=0$, where $\mathrm{Sing}(X)$ denotes the set of points in $M$ where $X$ vanishes.

\subsection{Nonuniform hyperbolicity}
\label{preliminaries nonuniformm hyperbolicity}

Let us start by defining a key concept to the study of Nonuniform hyperbolic systems called \textit{Lyapunov Exponents}. Indeed, \textit{Lyapunov Exponents} are fundamental tools in the study of dynamical systems as they quantify the exponential rates at which nearby trajectories diverge or converge over time. Let $\varphi_t\colon M\rarrow M$ be a $C^1$-flow
\begin{definition}
\label{definition lyapunov exponents}
    For each $x\in M$ and $v\in T_xM$ we define the \textit{Lyapunov Exponent} associated to $(x,v)$ as
    \[
    \chi(x,v)=\limsup_{|t|\rightarrow \infty }\frac{1}{|t|}\log\norm{(D\varphi_t)_xv},
    \]
    when the above limit exists.
\end{definition}
\noindent For each $x\in M$, it can be proved that $\chi(x,\cdot)$ attains finitely many distinct values on the tangent space at $x$, so we denote them by
\[
\chi_1(x)< \cdots <\chi_{l(x)}(x).
\]
At any point $x\in M$, the Lyapunov Exponents define a filtration of the tangent space at $x$, i.e. a sequence of subspaces $\{W_i(x)\}_{i=0}^{l(x)}$, defined by
\[
W_i(x)=\{v\in T_xM: \chi(x,v)\leq \chi_i(x)\},
\]
which satisfies $\{0\}=W_0(x)\subsetneq W_1(x)\subsetneq \cdots \subsetneq W_{l(x)}(x)=T_xM$. The multiplicity of each Lyapunov Exponent is the number $n_i(x)=\dim W_{i+1}(x)-\dim W_i(x)$.


The first natural question that arises about Lyapunov Exponents is why or when the limit in Definition \ref{definition lyapunov exponents} exists. This question is answered by one of the most fundamental theorems in the theory of nonuniform hyperbolic systems, and it is a consequence of the Multiplicative Ergodic Theorem by Valery Oseledets \cite{OSELDETS68}:
\begin{theorem}[Oseledets' Decomposition]
   Let $\varphi_t\colon M\rarrow M$ be a $C^1$-flow on a closed Riemannian manifold $M$. There exists an invariant set $\mathcal R \subset M$ of full measure with respect to any invariant Borel probability measure $\mu$, such that for every $x \in \mathcal R$:

\begin{enumerate}
    \item  The tangent space $T_xM$ admits a splitting:
    \[
    T_xM = \bigoplus_{i=1}^{l(x)} E_i(x),
    \]
    where $E_i(x)$ are called the Oseledets subspaces and $l(x)$ is the number of distinct Lyapunov exponents at $x$.

    \item  There exist real numbers $\chi_1(x) < \cdots < \chi_{l(x)}(x)$ such that for any $v \in E_i(x) \setminus \{0\}$:
    \[
    \lim_{|t| \to \infty} \frac{1}{t} \log \|D\varphi_t(x)v\| = \chi_i(x).
    \]

    \item The subspaces $E_i(x)$ are invariant under the derivative of the flow:
    \[
    D\varphi_t(x)(E_i(x)) = E_i(\varphi_t(x)).
    \]

    \item  For any disjoint subsets $I, J \subset \{1, \ldots, l(x)\}$, let $E_I(x) = \bigoplus_{i \in I} E_i(x)$ and $E_J(x) = \bigoplus_{j \in J} E_j(x)$. Then:
    \[
    \lim_{|t| \to \infty} \frac{1}{t} \log \angle(D\varphi_t(x)E_I(x), D\varphi_t(x)E_J(x)) = 0.
    \]

    \item The growth rate of the determinant of the derivative of the flow is given by:
    \[
    \lim_{|t| \to \infty} \frac{1}{t} \log \det(D\varphi_t(x)) = \sum_{i=1}^{l(x)} \chi_i(x) \dim E_i(x).
    \]

    \item The functions $\chi_i(x)$ and $\dim E_i(x)$ are $\varphi_t$-invariant. In particular, if $\mu$ is ergodic, then $\chi_i(x)$ and $\dim E_i(x)$ are $\mu$-almost everywhere constant.
\end{enumerate}
\end{theorem}
\begin{definition}
The set $\mathcal{R}$ is called the set of \textit{Regular Points}.
\end{definition}
\noindent We now outline essential results from Pesin's Theory, focusing on their application to flows, which is our main interest in this dissertation. All the presented results have analogous for diffeomorphisms.




\begin{definition}
   Let $\mu$ be an ergodic measure. If, there exists a $\mu$-full measure set $\tilde{\mathcal{R}}\subset\mathcal{R}$ such that for $x\in \tilde{\mathcal{R}}$ we have: the subspace $E_0(x)$, generated by the vectors with zero Lyapunov exponents, satisfies $E_0(x) = \R X$, where $X$ is the vector field that generates the flow $\varphi_t$. Then the flow is said to be \textit{nonuniformly hyperbolic} on $\tilde{\mathcal{R}}$, and $\mu$ is called a \textit{hyperbolic measure} on $\tilde{\mathcal{R}}$. 
\end{definition}

For the points $x\in \tilde{\mathcal{R}}$ notice that there exists $k\in \N$ such that $\chi_k(x)<0<\chi_{k+1}(x)$ and we can define

\[
    E^s(x)=\bigoplus_{\chi_i(x)<0}E_i(x) \,\,\,\,\,\,\mbox{    and   }\,\,\,\,\,\,E^u(x)=\bigoplus_{\chi_i(x)>0}E_i(x).
\]
We have the following results that summarize the properties of these spaces

\begin{theorem}\emph{\cite[Theorem 2.1.3]{BP2002}}
    The following properties hold for $x\in \tilde{\mathcal{R}}$:
    \begin{enumerate}
        \item $E^s(x)$ and $E^u(x)$ depend measurably on $x\in \tilde{\mathcal{R}}$.
        \item We have the splitting $T_xM=E^s(x)\oplus \R X\oplus E^u(x)$.
        \item $(D\varphi_t)_xE^{s,u}(x)=E^{s,u}(\varphi_t(x))$, for all $t\in \R$.
    \end{enumerate}
    There exists $\ve_0>0$ and Borel functions $C(x,\ve)>0$ and $K(x,\ve)>0$ such that for all $x\in \tilde{\mathcal{R}}$ and $0<\ve\leq \ve_0$
    \begin{enumerate}
        \item For all $v\in E^s(x)$ and $t>0$ it holds that
        \[
        \norm{(D\varphi_t)_xv} \leq C(x,\ve)\mrm{e}^{(\chi_k+\ve)t}\norm{v}.
        \]
        \item For all $v\in E^u(x)$ and $t<0$ it holds that
        \[
        \norm{(D\varphi_t)_xv} \leq C(x,\ve)\mrm{e}^{(\chi_k-\ve)t}\norm{v}.
        \]
        \item The angle between the spaces $E^s(x)$ and $E^u(x)$ is bounded from below by the function $K(x,\ve)$.
        \item The functions $C(x,\ve)$ and $K(x,\ve)$ are not $\varphi_t$-invariant in general but they satisfy
        \[
        C(\varphi_t(x),\ve)\leq C(x,\ve)\mrm{e}^{\ve|t|}
        \]
        and
         \[
        K(\varphi_t(x),\ve)\geq K(x,\ve)\mrm{e}^{-\ve|t|}
        \]
    \end{enumerate}
\end{theorem}

From now on, let us denote $\tilde{\mathcal{R}}$ by $\mathcal{R}$, just to simplify our notation. We can provide a more detailed description of the structure of a nonuniform hyperbolic set $\mathcal{R}$ of Regular Points. Let $\varepsilon > 0$ and $l > 0$, we define the  \textit{Pesin block} (of level $l$) as the following set:  
\[
\mathcal{R}^{i,j}_{\varepsilon,l} = \left\{x \in \mathcal{R} \, : \, C(x, \varepsilon) \leq l, \, K(x, \varepsilon) \geq \frac{1}{l}, \dim{E^s(x)}=i\text{ and } \dim{E^u(x)}=j\right\}.
\]
Sometime we will simply consider
\[
\mathcal{R}^l:=\left\{x \in \mathcal{R} \, : \, C(x, \varepsilon) \leq l, \, K(x, \varepsilon) \geq \frac{1}{l}\right\}.
\]
This set has the following fundamental properties:  
\begin{enumerate}
\item $
    T_x M=\bigoplus_{\lambda<0} E_\lambda(x)\oplus E^c(x)\bigoplus_{\lambda>0} E_\lambda$, where $E^c(x)=E_0(x)\oplus X(x)$.
    \item $\mathcal{R}^{i,j}_{\varepsilon,l}\subset \mathcal{R}^{i,j}_{\varepsilon,l+1}$;
    \item for any $t \in \mathbb{R}$, $\varphi_t(\mathcal{R}^{i,j}_{\varepsilon,l}) \subset \mathcal{R}^{i,j}_{\varepsilon,l'}$, where $l' = l \exp(|t| \varepsilon)$;
    \item the subspaces $E^s(x)$ and $E^u(x)$ depend continuously on $x \in \mathcal{R}^{i,j}_{\varepsilon,l}$.
    \item There exists $\delta:=\delta(i,j,\ve,l)>0$ such that for any $x\in \mathcal{R}^{i,j}_{\varepsilon,l}$, $W^s(x)$ and $W^u(x)$ contain open disks containing $x$ of dimension $i$ and $j$, respectively, and uniform diameter $\delta$. They are called, respectively, local stable and unstable manifolds of $x$ and are denoted, respectively, by $W^s_{\text{loc}}(x)$ and $W^u_{\text{loc}}(x)$.
\end{enumerate}

Now we present the Stable Manifold Theorem of Pesin, a fundamental result in the theory of nonuniformly hyperbolic dynamical systems, particularly in the context of flows.
\begin{theorem}\cite[Stable manifold for flows]{BP2002}
    Let $\mathcal{R}$ be a nonuniformly hyperbolic set for a smooth flow $\varphi_t$. Then, for every $x \in \mathcal{R}$, there exist a local stable manifold $W_{loc}^s(x)$ and a local unstable manifold $W_{loc}^u(x)$ such that:
    \begin{enumerate}
        \item $x \in W_{loc}^s(x)$, $T_xW_{loc}^s(x) = E^s(x)$, and for any $y \in W_{loc}^s(x)$ and $t >0$, 
        \[
        d(\varphi_t(x), \varphi_t(y)) \leq T(x)\lambda^t e^{\varepsilon t}d(x, y),
        \]
        where $T : \mathcal{R} \to (0, \infty)$ is a Borel function satisfying, for all $s \in \mathbb{R}$,  
        \[
        T(\varphi_s(x)) \leq T(x)e^{10\varepsilon|s|}.
        \]
        \item $x \in W_{loc}^u(x)$, $T_xW_{loc}^u(x) = E^u(x)$, and for any $y \in W_{loc}^u(x)$ and $t < 0$, 
        \[
        d(\varphi_t(x), \varphi_t(y)) \leq T(x)\lambda^{-t} e^{\varepsilon |t|}d(x, y).
        \]
        \end{enumerate}
\end{theorem}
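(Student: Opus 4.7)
The plan is to follow the classical Pesin strategy, adapted to flows through reduction to the time-one map $f := \varphi_1$. On the regular set the Oseledets splitting reads $T_xM = E^s(x) \oplus E^c(x) \oplus E^u(x)$ with $E^c(x) \supset \mathbb{R}X(x)$ and no net exponential growth along $E^c$. Since $\varphi_t$ commutes with $f$, it will suffice to build an $f$-invariant local disk tangent to $E^s(x)$ and then interpolate to continuous $t$ using the uniform bound $\sup_{0 \leq s \leq 1}\|D\varphi_s\|_\infty < \infty$ on the compact manifold $M$; the unstable statement is dual, obtained by running the whole argument for $f^{-1}$.

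First I would fix a Pesin block $\mathcal{R}^{i,j}_{\varepsilon,l}$ and construct Lyapunov charts $\Phi_x : B(0,r_l) \to M$ with $\Phi_x(0)=x$, mapping the standard splitting of $\mathbb{R}^n$ onto $E^s(x) \oplus E^c(x) \oplus E^u(x)$ and in which $f_x := \Phi_{f(x)}^{-1} \circ f \circ \Phi_x$ satisfies the almost-uniform estimates $\|Df_x(0)|_{E^s}\| \leq \lambda e^{\varepsilon}$ and $\|Df_x(0)|_{E^u}^{-1}\| \leq \lambda e^{\varepsilon}$. These come from the Lyapunov inner products
\[
\langle u, v\rangle^s_x := \sum_{k \geq 0} \lambda^{-2k} e^{-2\varepsilon k} \langle Df^k(x)u, Df^k(x)v\rangle,
\]
together with its symmetric backwards counterpart on $E^u(x)$; convergence on $\mathcal{R}$ is precisely the content of the Oseledets theorem, and the distortion $\|D\Phi_x\| \cdot \|D\Phi_x^{-1}\|$ is bounded by $C(x,\varepsilon) \leq l$. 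The quasi-invariance $C(\varphi_s x,\varepsilon) \leq C(x,\varepsilon) e^{O(\varepsilon)|s|}$, which follows directly from the weighted-sum definition, is what will yield the tempered bound $T(\varphi_s x) \leq T(x) e^{10\varepsilon|s|}$ at the end.

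Second I would run the Hadamard graph transform in these charts. Look for $W^s_{loc}(x) = \Phi_x(\mathrm{graph}\, h_x)$, where $h_x : B^s(0,r) \to E^c \oplus E^u$ with $h_x(0)=0$ and $\mathrm{Lip}(h_x) \leq 1/3$ is the unique fixed point of the graph-transform operator $\mathcal{T}$ associated to $f_x^{-1}$. The operator $\mathcal{T}$ is a contraction on the complete space of such Lipschitz graphs provided the radius $r = r_l$ is small enough that the nonlinear remainder of $f_x$ is dominated by its hyperbolic linear part. Standard bootstrap arguments upgrade $h_x$ to $C^{1+\alpha}$, and differentiating the fixed-point equation at the origin forces $Dh_x(0) = 0$, i.e. $T_x W^s_{loc}(x) = E^s(x)$. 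The contraction estimate in item $i$ then follows by iterating $f$ inside the chart and converting the Lyapunov norm back to the Riemannian norm, the conversion factor being exactly what one absorbs into $T(x) = O(C(x,\varepsilon))$.

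The main obstacle is showing that the radius $r_l$ can be chosen to depend only on the Pesin-block level $l$ and that the resulting $h_x$ and $W^s_{loc}(x)$ depend Borel-measurably on $x$. The Lipschitz constant of the quadratic remainder of $f_x$ in the chart is controlled by $C(x,\varepsilon)$ together with the $C^2$-norm of $f$ on $M$; to keep the graph-transform contraction uniform over the block one must arrange that the nonlinear error at scale $r_l$ is bounded by a quantity depending only on $l$, which requires careful non-uniform estimates on the second derivative of $f$ pulled back through $\Phi_x$. Once this uniform chart-size control is in place, the continuity of $x \mapsto E^s(x), E^u(x)$ on each block (as recorded in Section \ref{preliminaries}) combined with the Borel character of all Pesin-theoretic constructions yields the required Borel dependence of $T$, and the remaining estimates reduce to bookkeeping.
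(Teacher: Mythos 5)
The paper does not prove this theorem; it states it with a citation to Barreira--Pesin \cite{BP2001}, so there is no internal proof to compare your sketch against. Your outline follows the standard Pesin argument for flows as given in the cited reference: reduce to the time-one map $f=\varphi_1$, construct Lyapunov charts from the weighted inner products, run the Hadamard--Perron graph transform on a Pesin block, then interpolate in continuous time and absorb the tempering constant $C(x,\varepsilon)$ into $T(x)$, obtaining the bound $T(\varphi_s x)\le T(x)e^{10\varepsilon|s|}$ from the quasi-invariance $C(\varphi_s x,\varepsilon)\le C(x,\varepsilon)e^{O(\varepsilon)|s|}$. You also correctly identify the genuine technical obstruction (uniform chart radius and Borel dependence on the block), so the sketch is sound. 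Two small points worth tightening: first, convergence of the weighted series defining the Lyapunov inner product on $\mathcal R$ is not ``precisely the content'' of the Oseledets theorem --- it additionally requires a tempering argument (equivalently, the Pesin-block structure) to upgrade the asymptotic exponent estimates to a summable bound; second, since the flow direction $\mathbb R X(x)$ is contained in the neutral subspace $E^c(x)$, the graph $h_x$ necessarily takes values in $E^c(x)\oplus E^u(x)$, and the graph-transform contraction factor is governed by the fact that $Df^{-1}$ is bounded by $e^\varepsilon$ on $E^c\oplus E^u$ while it expands $E^s$ by at least $\lambda^{-1}e^{-\varepsilon}$, giving $\lambda e^{2\varepsilon}<1$. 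Making the role of the neutral flow direction explicit is the main new feature of the flow case relative to the diffeomorphism case, and your sketch handles it implicitly but should say so.
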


\begin{definition}
    The manifolds $W_{loc}^{s,u}(x)$ are called the (un)stable Pesin's manifolds.
\end{definition}

 For a regular point $x\in M$, denote its orbit by $\gamma:=\{\varphi_{t}(x)\}_{t\in \R}$. So we can define its \textit{global (un)stable manifold} as
\[
W^{s}(x)=\bigcup_{t \geq 0}\varphi_{-t}(W_{loc}^{s}(\varphi_t(x)), \quad W^{u}(x)=\bigcup_{t \geq 0}\varphi_{t}(W_{loc}^{s}(\varphi_{-t}(x)).
\]
We also define for every $x\in \mathcal{R}$ its \textit{global weakly stable manifolds} and \textit{global weakly unstable manifolds} at $x$ by
\[
W^{ws}(x)=\bigcup_{t \in \mathbb{R}}W^{s}(\varphi_t(x)), \quad W^{wu}(x)=\bigcup_{t \in \mathbb{R}}W^{u}(\varphi_t(x))
\]
For a hyperbolic orbit $\gamma=\{\varphi_t(x)\}_{t\in \R}$, we denote also the global weakly stable and unstable manifolds in $x$ by $W^{s}(\gamma)$ and $W^{u}(\gamma)$, respectively.

We are now finally able to define the main object of this paper, the \textit{Ergodic Homoclinic classes for flows}: given a periodic hyperbolic orbit $\gamma$, we define the \textit{stable and unstable  homoclinic class of $\gamma$} as follows:
\begin{equation}
\label{stable homoclinic class}
\Lambda^s(\gamma) = \{x \in M : x \text{ is a regular point and } W^s(x) \pitchfork W^u(\gamma) \neq \emptyset\},
\end{equation}
and
\begin{equation}
\label{unstable homoclinic class}
\Lambda^u(\gamma) = \{x \in M : x \text{ is a regular point and } W^u(x) \pitchfork W^s(\gamma) \neq \emptyset\}.
\end{equation}
\begin{definition}
Here, the symbol "$\pitchfork$" means the following: given two submanifolds of $M$, say $N_1$ and $N_2$, then $N_1\pitchfork N_2\neq \emptyset$ if
\begin{enumerate}
    \item $N_1\cap N_2\neq \emptyset$.
    \item For every $z\in N_1\cap N_2$, one has $T_zN_1+T_zN_2=T_zM$.
\end{enumerate}
In this case, we say that $N_1$ and $N_2$ have a transverse intersection.
\end{definition}

For the above definitions, we are not assuming that $Sing(X)=\emptyset$ nor that $\gamma$ is not a singular orbit; it could be that $X|_\gamma =0$. However, as stated in Remark \ref{orbit not singular}, our result does not work in this case. We also highlight that $\Lambda^s(\gamma)$ is $s$-saturated and $\Lambda^u(\gamma)$ is $u$-saturated, i.e. for every $x\in \Lambda^{s,u}(\gamma)$ we have $W^{s,u}(x)\subset \Lambda^{s,u}(\gamma)$. We also easy to see that both sets are $\varphi_t$-invariant.
The \textit{Ergodic homoclinic class of $\gamma$} is then defined as the $s,u$-saturated and $\varphi_t$-invariant set:
\begin{equation}
    \Lambda(\gamma) = \Lambda^s(\gamma) \cap \Lambda^u(\gamma).
\end{equation}

\begin{figure}[h!]
\begin{overpic}[width=0.9\textwidth,tics=10,grid=false]{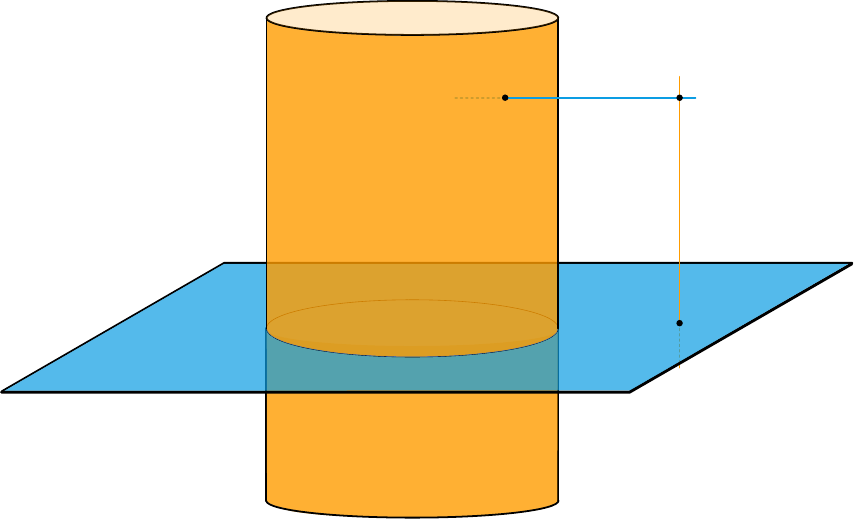}
\put(60,18){$\gamma$}
\put(90,32){$W^s(\gamma)$}
\put(22,57){$W^u(\gamma)$}
\put(77.5,47.5){$z$}
\put(82,48){$W^s(z)$}
\put(78,53){$W^u(z)$}
\end{overpic}
\centering
\caption{Illustration of the local dynamics around a point $z\in \Lambda(\gamma)$.}
\end{figure}

A prior, there is no reason to believe that the above set is not empty. Indeed, it is precisely the result obtained by Theorem \ref{criteria for ergodicity} to state conditions that guarantee that $\Lambda(\gamma)$ is nonempty.

We now discuss the main property we are going to use to prove our results, the \textit{Absolute Continuity} of the stable and unstable partitions by $W^s$ and $W^u$. Essentially, it says that we can transfer information from a lamination $W^u(x)$ to another $W^u(y)$ by flowing through $W^s(x)$ without losing too much information from the measure theoretical point of view. Below, we are going to make precise the expression "flowing through $W^s(x)$" as the holonomy maps. \textit{Absolute Continuity} is the core property of the Hopf's Argument.

\begin{definition}
    A partition $\xi$ of $M$ is called a \textit{Measurable Partition} if the quotient space $M/\xi$ can be generated by a countable collection of measurable sets. 
\end{definition}

\noindent When $M$ is a Lebesgue space, the quotient space $M/\xi$ obtained from a measurable partition $\xi$ is also a Lebesgue space \cite{Rohlin52} (see also the chapter 15 of \cite{COUDENE16ergodic}). Measure partitions are important from the measure-theoretical point of view since they allow us to decompose a measure into measures along the "pieces" of the partition:
\begin{proposition}
For any measurable partition $\xi$ of a Lebesgue space $(M, \mathcal{B}, m)$, there exists a canonical system of \textit{Conditional Measures} $m^\xi_x$ with the following properties:
\begin{enumerate}
    \item The conditional measures $m^\xi_x$ are defined in $\xi(x)$, the partition element containing $x$.
    \item For any $A \in \mathcal{B}$, the set $A \cap \xi(x)$ is measurable in $\xi(x)$ for almost all $\xi(x) \in M/\xi$.
    \item The mapping $x \mapsto m^\xi_x(A \cap \xi(x))$ is measurable, and the measure $m$ satisfies:
    \[
    m(A) = \int_{M/\xi} m^\xi_x(A \cap \xi(x)) \, dm_T,
    \]
    where $m_T$ denotes the quotient measure in $M/\xi$.
\end{enumerate}
This canonical system of conditional measures is unique (mod 0) for any measurable partition. Conversely, if a canonical system of conditional measures exists for a partition, the partition must be measurable.
\end{proposition}
\begin{proof}
    See the Chapter $5$ of \cite{KerleyViana}.
\end{proof}

\begin{definition}A measurable partition $\xi$ is said to be \textit{subordinate} to the unstable partition $W^u$ if, for $m$-almost every $x \in M$, the following conditions hold:
\begin{enumerate}
    \item  $\xi(x) \subset W^u(x)$
    \item  $\xi(x)$ contains a neighborhood of $x$ that is open in the topology of $W^u(x)$.
\end{enumerate}
\end{definition}

\noindent Given a Riemannian metric $g$ for $M$, it induces Riemannian measures on each $W^\tau(x)$ which we are going to denote by $\lambda_x^{\tau}$, $\tau=s,u$.
\begin{definition}
\label{absolute continuity}
    We say that a measure $\nu$ has absolutely continuous conditional measures with respect to the unstable (resp. stable) manifolds if for any measurable partition $\mathcal{P}$ subordinated to $W^u$ (resp. $W^s$) we have $\nu_x^{\mathcal{P}}<<\lambda_x^u$ (resp. $\lambda_x^s$) for $\nu$-a.e. $x$.
\end{definition}

We are now able to make sense to the previously used expression "transfer information from a lamination $W^u(x)$ to another $W^u(y)$" which is the notion of \textit{Holonomy Map}: let $\mathcal{R}^{i,j}_{\varepsilon,l}$ be a Pesin block. For each point $x \in \mathcal{R}^{i,j}_{\varepsilon,l}$ that admits a negative (resp. positive) Lyapunov exponent, there exists the local Pesin stable (resp. unstable) manifold, denoted by $W^s_{\text{loc}}(x)$ (resp. $W^u_{\text{loc}}(x)$), with diameter at least $\delta>0$. For such $x\in\mathcal{R}^{i,j}_{\varepsilon,l}$ and given two transversal disks $D_1$ and $D_2$ to $W^u_{\text{loc}}(x)$ (resp. $W^s_{\text{loc}}(x)$) and close to each other, the stable (resp. unstable) holonomy map $h^{s,u}\colon D_1'\subset D_1\rarrow D_2$ is defined as:

\[
h^{s,u}(z):=W^{s,u}(z)\cap D_2,
\]
where
\[
D_1':=\{z\in D_1\cap R_{\ve,l}^{i,j}: W^{s,u}_{loc}(z)\pitchfork D_{1,2}\neq \emptyset \}
\]
Essentially, for a point on $D_1$ we consider (when it exists) its stable (resp. unstable) Pesin manifold and its intersection with $D_2$. Analogously, we can define the weak-stable and weak-unstable holonomy maps by switching $W^{s,u}_{loc}$ by $W^{ws,wu}_{loc}$.

\begin{theorem}\emph{\cite[Theorem 4.3.1]{BP2002}}
    The (weak) holonomy maps ($h^{ws,wu}$) $h^{s,u}$  are measurable and absolutely continuous with respect to the Lesbegue measures induced on $D_1$ and $D_2$, that is they send sets of zero Lesbegue measure into sets of zero Lesbegue measure.
\end{theorem}

\begin{figure}[h!]
\begin{overpic}[width=0.8\textwidth,tics=10,grid=false]{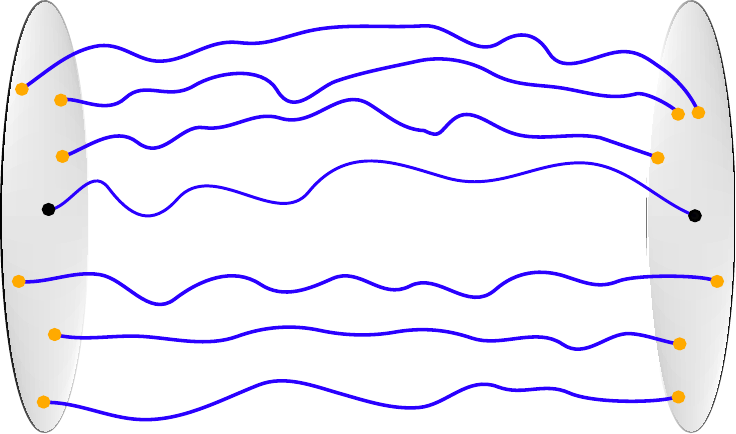}
\put(6,28){$x$}
\put(50,32){$W^s_{\text{loc}}(x)$}
\put(6,37){$z$}
\put(91,37){$h^s(z)$}
\put(5,-3){$D_1$}
\put(93,-3){$D_2$}
\end{overpic}
\centering

\caption{Representation of the stable holonomy map}
\end{figure}

\subsection{Sinai-Ruelle-Bowen measures}
In this subsection, we will recall a particular class of measures called \textit{Sinai-Ruelle-Bowen measures} or only \textit{SRB measures}. These are the measures treated in Theorems \ref{homoclinic class for SRB}, \ref{SRB ergodic components has full measure homoclinic class}, and \ref{SRB measures with full measure homoclinic class are equal}. We do not intend to develop the whole theory about these measures since we are going to use mostly their definition. For a broader treatment, we refer to \cite{YOUNG02}. SRB measures are frequently introduced as "the invariant measures most compatible with volume when volume is not preserved" (see \cite{YOUNG02}). The most important property for us is going to be absolute continuity of conditional measures:

\begin{definition}
   A measure $\nu$ is called an SRB (Sinai-Ruelle-Bowen) measure if it has a positive Lyapunov Exponent at $\nu$-almost every point $x$ and absolutely continuous conditional measures with respect to the unstable manifolds.
\end{definition}
Although other equivalent definitions are possible, the above definition presents precisely the important property that allow us to perform a version of the Hopf Argument in our context. Remember that our results deal with the relations between SRB measures and homoclinic classes for flows, and no further directions are considered in this dissertation. The techniques we use to obtain Theorem \ref{criteria for ergodicity} are extremely similar to those for SRB measure because of the absolute continuity property. Besides that, in some cases it is easier to work with the equivalent definition after the work of Ledrappier-Young \cite{ledrappier1985metric}: absolute continuity with respect to the unstable manifolds is equivalent to Pesin's formula, i.e., it holds that

\[
h_\nu(\varphi_t)=\int \sum_{\lambda(x)>0}\lambda(x)d\nu.
\]

Recall that Theorem \ref{SRB ergodic components has full measure homoclinic class} refers to ergodic components of a metric. This notion is due the classical result \textit{Ergodic decomposition theorem}, which we state below for completeness in the manifold setting. For the proof, we recommend the book of Viana and Oliveira \cite{KerleyViana} Chapter $5$. Remember that given a partition $\mathcal{P}$ of a probability space $(M,\mu)$ into measurable sets, then there exists a canonical structure of probability space $(\mathcal{P},\widehat{\mu})$.

\begin{theorem}[Ergodic decomposition]
    If $M$ is a compact manifold, $f\colon M\rarrow M$ a measurable transformation and $\mu$ a probability measure. Then, there exist a measurable set $M_0$ with full measure, a partition $\mathcal{P}$ of $M_0$ into measurable subsets and a collection of probability measures in $M$, say $\{\mu_P: P\in \mathcal{P}\}$, such that the following hold:
    \begin{enumerate}
        \item $\mu_P(P)=1$, for $\widehat{\mu}$ almost every $P\in \mathcal{P}$.
        \item For any measurable subset $E\subset M$, the map $P\mapsto \mu_P(E)$ is measurable.
        \item For $\widehat{\mu} $almost every $P\in \mathcal{P}$, $\mu_P$ is $f$-invariant and ergodic.
        \item For any measurable subset $E\subset M$, it holds:
        \[
        \mu(E)=\int_{\mathcal{P}}\mu_P(E)\dd\widehat{\mu}(P).
        \]
    \end{enumerate}
\end{theorem}

For the proof of Theorem \ref{SRB ergodic components has full measure homoclinic class}, we are going to use the following flow version of a result by Katok in \cite{katok1980lyapunov} whose proof can be found in \cite{LI&LIU24}:
\begin{theorem}[\cite{katok1980lyapunov,LI&LIU24}]
\label{Katok existence of periodic orbit}
    Let $\mu$ be a regular hyperbolic ergodic measure of a $C^2$ flow $\varphi_t$. Then there exists a hyperbolic periodic orbit $\gamma$ such that $supp(\mu)\subset \overline{\Lambda(\gamma)}$ and $\mu$ is homoclinically related with $\gamma$. In particular, for $\mu$-almost every point $x$, $\varphi_t(W^u(x))$ accumulates on $W^u(\gamma)$ as $t$ goes to infinity.
\end{theorem}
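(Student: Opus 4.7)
The strategy is to reduce to Katok's classical theorem for $C^{1+\alpha}$ diffeomorphisms by passing to a Poincar\'e section. Fix $\varepsilon>0$ much smaller than the smallest non-zero Lyapunov exponent of $\mu$, and choose a Pesin block $\mathcal{R}^{i,j}_{\varepsilon,\ell}$ of positive $\mu$-measure, where $i=\dim E^s$ and $j=\dim E^u$ are $\mu$-a.e. constant. Because $\mu$ is regular we may select a density point $p\in \mathcal{R}^{i,j}_{\varepsilon,\ell}$ with $X(p)\neq 0$, and build a small codimension-one disc $\Sigma$ transverse to $X(p)$ on which the first-return map $P_\Sigma:\Sigma_0\to\Sigma$ is $C^2$. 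The disintegration $\mu_\Sigma$ of $\mu$ along the flow is $P_\Sigma$-invariant, ergodic, and non-uniformly hyperbolic, with Lyapunov spectrum scaled by the mean return time.

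Next I would apply the discrete-time Katok theorem to $(P_\Sigma,\mu_\Sigma)$ to obtain a hyperbolic periodic point $q\in\Sigma$ for $P_\Sigma$ together with a $\mu_\Sigma$-full measure set of points $x\in\Sigma$ whose Pesin stable and unstable manifolds have transverse intersections with $W^u_{P_\Sigma}(q)$ and $W^s_{P_\Sigma}(q)$, respectively. Setting $\gamma=\{\varphi_t(q):t\in\mathbb{R}\}$ gives a hyperbolic closed orbit of the flow; its weakly stable and unstable manifolds $W^s(\gamma),W^u(\gamma)$ are the flow-saturations of the corresponding $P_\Sigma$-invariant manifolds, and hyperbolicity of $q$ for $P_\Sigma$ translates directly to hyperbolicity of $\gamma$ for $\varphi$. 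Flow-saturating the transverse intersections promotes them to $W^u(x)\pitchfork W^s(\gamma)\neq\emptyset$ and $W^s(x)\pitchfork W^u(\gamma)\neq\emptyset$ for $\mu$-a.e.\ $x$, placing $x\in\Lambda(\gamma)$ and hence $\mathrm{supp}(\mu)\subset \overline{\Lambda(\gamma)}$. The ``in particular'' statement is then the inclination ($\lambda$-)lemma applied at any transverse point of $W^u(x)\cap W^s(\gamma)$: a small disc inside $W^u(x)$ around that point, flowed forward, $C^1$-accumulates onto $W^u(\gamma)$.

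The main obstacle is that the Pesin estimates on $\mathcal{R}^{i,j}_{\varepsilon,\ell}$ degrade along the flow by a factor $e^{10\varepsilon|t|}$, so the return time to $\Sigma$ cannot be allowed to grow unboundedly when Katok's shadowing is invoked. One handles this by shrinking $\Sigma$ around $p$ and restricting to a positive-measure subset $\Sigma'\subset\Sigma$ of points whose return time is bounded by some $T_0$ (a Kac-type argument, using that $\mu$ is finite and the flow has no singularities on $\Sigma'$), and by checking that on $\Sigma'$ the induced map is genuinely $C^{1+\alpha}$ non-uniformly hyperbolic with continuous invariant splitting inherited from the continuity of $E^s,E^u$ on $\mathcal{R}^{i,j}_{\varepsilon,\ell}$. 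Once this reduction is in place the discrete Katok approximation and homoclinic intersection arguments apply verbatim, and the flow-saturation step is purely topological.
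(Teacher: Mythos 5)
The paper does not prove this theorem; it is stated as a citation, with the comment that a ``flow version of a result by Katok'' is being invoked, the cited reference \cite{katok1980lyapunov} being Katok's diffeomorphism paper. So there is no in-paper proof to compare against, and what you are really being asked to sketch is the (well-known but rarely written out) reduction from the flow case to Katok's discrete theorem. Your Poincar\'e-section strategy is indeed the standard way to do this, and the overall outline (section near a density point with $X\neq 0$, transfer Lyapunov data to the return map, get a hyperbolic periodic point for $P_\Sigma$, flow-saturate its invariant manifolds, then flow-saturate the transverse homoclinic intersections and finish the ``in particular'' clause with the inclination lemma) is the right skeleton.

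There is, however, a genuine gap in the paragraph that is supposed to handle the main technical obstruction. You identify the problem as the $e^{10\varepsilon|t|}$ degradation of the flow's Pesin estimates, and propose to fix it by passing to $\Sigma'=\{x\in\Sigma:\tau_\Sigma(x)\le T_0\}$. This has two issues. First, the diagnosis misplaces the obstruction: the correct objects to control are the Pesin blocks of the return map $P_\Sigma$, which have their own Oseledets/Lyapunov structure (with exponents scaled by the mean return time and log-integrable derivative cocycle by Kac's lemma), and these are not obtained by merely transporting the flow's blocks along the orbit. Second, and more seriously, the set $\Sigma'$ is not $P_\Sigma$-invariant and the first return map to $\Sigma'$ does not have return time bounded by $T_0$, so ``the induced map on $\Sigma'$'' to which you intend to apply Katok's theorem is left undefined; as written you neither have a self-map of a compact manifold nor explain how the shadowing/closing arguments in Katok's proof apply to the partially defined $P_\Sigma$. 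The standard repair is to drop the bounded-return-time device, fix a compact Pesin block $\Lambda'\subset\Sigma$ \emph{for $P_\Sigma$} of positive $\mu_\Sigma$-measure, and invoke the Katok shadowing and closing lemmas for orbit segments that begin and end in $\Lambda'$ (these lemmas are local and do not require $P_\Sigma$ to be globally defined). After that, your flow-saturation and $\lambda$-lemma steps are fine.
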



\section{Typical points for smooth and SRB measures}
\label{section typical points}
\noindent As mentioned in the introduction, the strategy of the proofs of Theorems \ref{criteria for ergodicity} and \ref{homoclinic class for SRB} follows the line of Hopf's Argument, which was developed to prove ergodicity of uniform hyperbolic systems. The main difference here is that some important sets (which we will make precise) are not uniformly distributed along Pensin's manifolds. Instead we will see that they are well distributed from the measure point of view but, in general, they are not fully $su-$saturated. 

To make precise our statements, let us introduce some important notions such as Birkhoff's average: for each $f \in L_m^1(M;\mathbb{R})$ the Birkhoff's Ergodic Theorem assures that $m-$almost everywhere the following limits are well-defined
\[
f^\pm(x)=\lim_{T\rightarrow \pm\infty }\frac{1}{T}\int_0^T f(\varphi_t(x))dt.
\]
We also have that $f^+(x)=f^-(x)$ for $m-$almost every $x \in M$ and $f^\pm$ are $\varphi_t-$invariant. Remember that ergodicity is equivalent to $f^+$ being constant almost everywhere. The next Lemmas give us a description of the behavior of Birkhoff's averages along Pesin's manifolds. Essentially we will see that almost every points in the same (un)stable Pesin's manifold present the same Birkhoff average, so we say that the subset of Pesin's manifolds with constant Birkhoff average is well distributed along such manifolds from the measure theoretical point of view.

\begin{lemma}[Typical points for continuous functions]
\label{Typical points for continuous functions}
There exists a $\varphi_t-$invariant set $M_0$, with $m(M_0)=1$, such that for any $f \in C^0(M)$ we have: if $x \in M_0$, then $f^+(x)=f^+(w)$, for any $w \in W^s(x)$ and $m_x^u-$almost every $w \in W^u(x)$.
\end{lemma}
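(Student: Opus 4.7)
The plan is to adapt the classical Hopf argument to the Pesin setting for flows. Fix a countable family $\{f_n\}_{n\in\mathbb{N}}\subset C^0(M)$ that is dense in the uniform topology. Applying the Birkhoff Ergodic Theorem to each $f_n$ yields a $\varphi_t$-invariant set $A\subset M$ with $m(A)=1$ on which $f_n^+$ and $f_n^-$ both exist and coincide for every $n$. Since Cesaro averages depend continuously on the integrand in the sup norm, any statement $f_n^\pm(x)=f_n^\pm(w)$ valid for all $n$ extends automatically to all $f\in C^0(M)$. Thus the task reduces to establishing the $W^s$- and $W^u$-rigidity of the Birkhoff averages on a full-measure set.

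The stable direction works for every $x\in A$ without any absolute continuity input. If $w\in W^s(x)$, Pesin's stable manifold theorem (recalled above) gives $d(\varphi_t(x),\varphi_t(w))\to 0$ as $t\to+\infty$; since $M$ is compact, uniform continuity of each $f_n$ yields $|f_n(\varphi_t(x))-f_n(\varphi_t(w))|\to 0$, and hence $f_n^+(x)=f_n^+(w)$. Density of $\{f_n\}$ promotes this to the desired equality $f^+(x)=f^+(w)$ for every $f\in C^0(M)$ and every $w\in W^s(x)$.

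For the unstable direction I would exhaust the regular set by an increasing sequence of Pesin blocks $\mathcal{R}^{i,j}_{\varepsilon,l}$ and, on each block, construct a measurable partition $\xi$ subordinate to $W^u$. Pesin's absolute continuity theorem for smooth invariant measures (which follows from the absolute continuity of the unstable holonomy stated in the preliminaries, together with a standard disintegration argument) asserts that the conditional measures $m^\xi_x$ are equivalent to the Riemannian measure $\lambda_x^u$ on $\xi(x)\subset W^u_{\mathrm{loc}}(x)$. A Fubini-type argument then produces a set $M_0^u\subset A$ with $m(M_0^u)=1$ such that, for every $x\in M_0^u$, one has $m_x^u$-almost every $w\in W^u(x)$ in $A$. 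Flow-saturating and passing to the union over the countable family of blocks covers a full-measure subset of $M$. For such $w\in A\cap W^u(x)$, backward contraction on $W^u$ gives $d(\varphi_{-t}(x),\varphi_{-t}(w))\to 0$ and uniform continuity yields $f_n^-(x)=f_n^-(w)$; combining with $f_n^+=f_n^-$ on $A$ at both $x$ and $w$ gives $f_n^+(x)=f_n^+(w)$, and density finishes the argument.

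Finally, I would define $M_0$ as the $\varphi_t$-invariant hull of $M_0^u$; since $\varphi_s$ preserves $m$, maps $W^\tau(x)$ to $W^\tau(\varphi_s(x))$ for $\tau=s,u$, and commutes with Birkhoff averages, the full-measure and full-distribution properties pass to the hull unchanged. The main obstacle is the disintegration step: because $W^u$ is only a Pesin lamination, not a true foliation, one cannot disintegrate $m$ globally but must argue block by block, controlling how the measurable partitions inside one block paste together with global unstable leaves under flow-saturation, and then carefully invoking absolute continuity of $m$ along $\xi$ to transfer the full-$m$-measure set $A$ to a set of full $m_x^u$-measure on almost every unstable leaf.
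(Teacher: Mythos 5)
Your proposal is correct in spirit and matches the paper's strategy: both are the Hopf argument adapted to the Pesin setting, combining the Birkhoff Ergodic Theorem, forward contraction on $W^s$ and backward contraction on $W^u$ together with uniform continuity, absolute continuity of the unstable lamination, and finally flow-invariance of $f^{\pm}$ to pass from local to global unstable manifolds. You also correctly supply a point the paper glosses over, namely the use of a countable sup-dense family $\{f_n\}$ to make $M_0$ independent of $f$.

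Where you diverge is in how the Fubini step is executed, and this is exactly the spot you flag as your ``main obstacle.'' The paper does not build measurable partitions subordinate to $W^u$ and disintegrate $m$; instead it argues by contradiction. Setting $S_0=\{f^+=f^-\}$, it supposes a positive-measure set $A\subset S_0$ of points $x$ for which $m_x^u(W^u_{\mathrm{loc}}(x)\setminus S_0)>0$, picks a Lebesgue density point $y$ of $A$, chooses a small transversal disk $T\subset A\cap B_\varepsilon(y)$ to the local unstable plaques (possible after restricting to a Pesin block, where the unstable manifolds vary continuously), and integrates the fiberwise bad sets over $T$ to produce a positive-$m$-measure set outside $S_0$, contradicting $m(S_0)=1$. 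This density-point argument achieves the same conclusion as your ``disintegrate block by block'' plan but is self-contained: it never requires gluing measurable partitions across blocks or relating them to global leaves, since everything happens locally inside one block and the passage to $W^u(x)$ is done afterwards purely by invariance of $f^+$. If you want to finish your version cleanly, you can replace the global disintegration by exactly this local contradiction: fix a Pesin block of positive measure, take a density point of $A$ inside it, and run the transversal-disk argument there; then saturate by the flow. That closes the gap you identified.
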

\begin{proof}
The conclusion for the stable manifold is a consequence of continuity. Let $w\in W^s(x)$, in particular $d(\varphi_t(x),\varphi_t(y))\rarrow 0$ as $t\rarrow\infty$. Let us fix any $f\in C^0(M)$, then compactness of $M$ implies that $f$ is uniformly continuous. Therefore, let $\ve>0$ and chose $\delta>0$ such that for all $x,y\in M$, with $d(x,y)<\delta$, it holds that $|f(x)-f(y)|<\frac{\ve}{2}$. Now, let $T_0>0$ be such that $d(\varphi_t(x),\varphi_t(w))<\delta$, for $t\geq T_0$. We get,
\begin{align*}
    \left|\frac{1}{T}\int_0^T(f(\varphi_t(x))-f(\varphi_t(w))\dd t\right|&\leq \frac{1}{T}\int_0^{T_0}|f(\varphi_t(x))-f(\varphi_t(w))|\dd t\\
    &+\frac{1}{T}\int_{T_0}^{T}|f(\varphi_t(x))-f(\varphi_t(w))|\dd t\\
    &\leq \frac{I}{T}+\frac{(T-T_0)}{T}\frac{\ve}{2},
\end{align*}
where
\[
I=\int_0^{T_0}|f(\varphi_t(x))-f(\varphi_t(w))|\dd t <\infty
\]
It implies that for $T>0$ sufficiently large, we get

\[
\left|\frac{1}{T}\int_0^T(f(\varphi_t(x))-f(\varphi_t(w))\dd t\right|<\ve.
\] 
Therefore, we conclude that $f^+(x)=f^+(w)$, for every $f\in C^0(M)$.

Now, the conclusion for the unstable manifold is a little bit more delicate. We proceed as follows: define the following full-measured set (see Theorem 3.2.6 of \cite{KerleyViana}): 
\[
S_0=\{x \in M: f^+(x) = f^-(x) \mbox{ are well-defined}, \mbox{ for all } f\in C^0(M)\}.
\]
\noindent We will prove that $m-$almost every point $x$ of $S_0$ is such that $m_x^u-$almost every $\xi \in W_{loc}^u(x)$ also lies inside $S_0$. In fact, suppose this is not the case. Thus there exists a subset $A\subset S_0$, with $m(A)>0$, such that for any $x \in A$ there exists a set $B_x \subset W_{loc}^u(x)\setminus S_0$, with $m_x^u(B_x)>0$. Without losing generality, we can switch $A$ by $A\cap \mathcal{R}^l$, where $\mathcal{R}^l$ is a Pesin block such that this intersection has positive measure. Let $y \in A$ be a Lesbegue density point (in fact we only need it to be in $\mrm{supp}(m)$), that is
\[
\lim_{\varepsilon \rightarrow 0}\frac{m(A\cap B_\varepsilon(y))}{m(B_\varepsilon(y))}=1
\]
So, for $\varepsilon>0$ small enough we can consider a small disk $T$ inside $A\cap B_\varepsilon(y)$ that is transverse to its points unstable manifolds and the set
\[
B=\bigcup_{x \in T}\tilde{B}_x,
\]
where
\[
\tilde{B}_x=\{w \in B_x: w \in B_\varepsilon(y)\}
\]
satisfies
\[
m(B)=\int_T m_x^u(\tilde{B}_x)dm_T(x)>0.
\]
This is a contradiction because $B$ is a positive measure set outside $S_0$.

\noindent Define the full measure set
\[
S_1=\{x \in S_0: m_x^u-\mbox{almost every } \xi \in W_{loc}^u(x) \mbox{ lies inside } S_0\}.
\]

\noindent If $\xi \in W_{loc}^u(x)$, then for any $x \in M$, we have that $f^+(\xi)=f^+(x)$. In addition if $x \in S_0$ and $\xi \in  W_{loc}^u(x)\cap S_0$, then by continuity $f^+(\xi)=f^-(\xi)=f^-(x)=f^+(x)$. We conclude that $S_1$ consist of points in $S_0$ such that $m_x^u-$almost every $\xi \in W_{loc}^u(x)$ satisfies $f^+(\xi)=f^+(x)$. Since $f$ is continuous, then $f^+$ is constant on $W^s(x)$. Thus, by the invariance of $f^+$ we can find a full measure set $M_0$ with the desired property.
\end{proof}
\begin{remark}
    Notice that $M_0$ is $s-$saturated.
\end{remark}

\begin{lemma}[Typical set for $L^1$ functions]
\label{Typical points for integrable functions}
For any $f \in L^1(M;\mathbb{R})$ there exists a $\varphi_t-$invariant set $\mathcal{T}_f$, with $m(\mathcal{T}_f)=1$, that satisfies: for all $x \in \mathcal{T}_f$ we have $f^+(z)=f^+(x)$ for $m_x^s-$almost every $z \in W^s(x)$ and $m_x^u-$almost every $z \in W^u(x)$. 
\end{lemma}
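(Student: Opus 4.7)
The plan is to deduce the statement from Lemma \ref{Typical points for continuous functions} by approximating $f$ in $L^1$ by continuous functions and then transferring an $m$-almost everywhere convergence on $M$ to an almost everywhere convergence along Pesin leaves using absolute continuity of the conditional measures.

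\textbf{Step 1 (approximation).} I would first pick $(g_n) \subset C^0(M)$ with $\|f - g_n\|_{L^1(m)} \to 0$. Birkhoff's theorem applied to $|f - g_n|$ yields
\[
\int |f^+ - g_n^+|\, dm \;\leq\; \int |f - g_n|\, dm \;\longrightarrow\; 0,
\]
so $g_n^+ \to f^+$ in $L^1(m)$; after passing to a subsequence I may assume $g_n^+(x) \to f^+(x)$ on a full-measure set $E \subset M$. For each $n$, Lemma \ref{Typical points for continuous functions} supplies a $\varphi_t$-invariant full-measure set on which $g_n^+$ is constant along every $W^s$-leaf and $m_x^u$-a.e.\ constant along $W^u(x)$; intersecting $E$ with the countable collection of these sets preserves full measure.

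\textbf{Step 2 (lifting to the leaves).} The substantive step is to upgrade the bare fact $m(E)=1$ to the leafwise statement: for $m$-a.e.\ $x$, the set $E$ has full $m_x^s$-measure in $W^s(x)$ and full $m_x^u$-measure in $W^u(x)$. I would fix a countable family $\{\xi_k\}$ of measurable partitions subordinate to $W^s$ whose atoms exhaust arbitrarily large pieces of $W^s(x)$ for $m$-a.e.\ $x$, and analogously $\{\eta_k\}$ subordinate to $W^u$. Rokhlin's disintegration applied to each $\xi_k$ gives
\[
1 \;=\; m(E) \;=\; \int_{M/\xi_k} m_x^{\xi_k}\!\bigl(E \cap \xi_k(x)\bigr)\, dm_{T_k}(x),
\]
so $m_x^{\xi_k}$ assigns full measure to $E \cap \xi_k(x)$ for every $k$ and $m$-a.e.\ $x$. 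A diagonal intersection over $k$ (and an identical argument with $\{\eta_k\}$) produces a full-measure set $E'$ with the desired leafwise property. Pesin's absolute continuity theorem is what allows the $\xi_k$ to be genuinely subordinate to the (globally non-measurable) Pesin foliation and the conditionals $m_x^{\xi_k}$ to be identified, up to equivalent densities bounded away from $0$ and $\infty$ on each Pesin block, with $m_x^s$ restricted to the atom.

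\textbf{Step 3 (limit passage and invariance).} For $x \in E \cap E'$ and $m_x^s$-a.e.\ $z \in W^s(x)$, the membership $z \in E$ gives $g_n^+(z) \to f^+(z)$, while the constancy from Step 1 gives $g_n^+(z) = g_n^+(x)$ for every $n$; taking the limit yields $f^+(z) = f^+(x)$. The unstable clause is handled identically, after first intersecting, for each fixed $x$, the countable collection (over $n$) of full-$m_x^u$ sets on which $g_n^+ \equiv g_n^+(x)$. Finally, I would define $\mathcal{T}_f$ to be the $\varphi_t$-saturation of $E \cap E'$: $\varphi_t$-invariance of $m$, the equivariance $\varphi_t(W^\tau(x)) = W^\tau(\varphi_t(x))$, and the equivalence of $(\varphi_t)_\ast m_x^\tau$ with $m_{\varphi_t(x)}^\tau$ propagate the leafwise almost-constancy of $f^+$ along orbits. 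The main obstacle is Step 2: upgrading a coarse $m$-almost everywhere statement to a leafwise almost everywhere statement on the non-measurable Pesin foliations requires both a judicious choice of subordinate measurable partitions and Pesin's absolute continuity theorem, while Steps 1 and 3 are standard density and limit manipulations.
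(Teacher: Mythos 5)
Your proposal is correct and follows essentially the same route as the paper: approximate $f$ in $L^1$ by continuous functions, pass to an a.e.\ convergent subsequence of Birkhoff averages, intersect with the set $M_0$ from Lemma~\ref{Typical points for continuous functions}, and conclude by a limit passage along the leaves. Your Step~2, which upgrades $m(E)=1$ to leafwise full $m_x^\tau$-measure via Rokhlin disintegration over subordinate partitions and Pesin absolute continuity, simply spells out what the paper compresses into the single unproved assertion ``Since $m_x^s$-almost every $w\in W^s(x)$ lies inside $\mathcal{T}_0$''.
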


\begin{proof}
Let $f \in L^1(M;\mathbb{R})$ and consider a sequence $(f_n)_n \subset C^0(M)$ that converges to $f$ in the $L^1-norm$. Then, we have that $(f_n^+)_n$ converges, in the $L^1-norm$, to $f^+$. It implies that there is a subsequence, say $(f_{n_k}^+)_k$, that converges almost everywhere to $f^+$. Set
\[
\mathcal{T}_0=\{x \in M: f_{n_k}^+(x) \rightarrow f^+(x)\}
\]
and define $\mathcal{T}_f:=\mathcal{T}_0\cap M_0$, where $M_0$ is the set of typical points given by the previous lemma. Of course $m(\mathcal{T}_f)=1$. If $x \in \mathcal{T}_f$, then $f_{n_k}^+(x) \rightarrow f^+(x)$ and we also have $f_{n_k}^+(w)=f_{n_k}^+(x)$, for all $w \in W^s(x)$ and $m_x^u-$almost every $w \in W^u(x)$. Since $m_x^s-$almost every $w \in W^s(x)$ lies inside $\mathcal{T}_0$, we see that $f_{n_k}^+(w)=f_{n_k}^+(x)$ converges both to $f^+(w)$ and $f^+(x)$. The result follows.
\end{proof}
\noindent If $\mu$ is an SRB measure, then its conditional measures are absolutely continuous along stable and unstable manifolds. Hence, the following two Lemmas can be proved analogously by using a point  in the support of $\mu$ instead of using a Lesbegue density point.
\begin{lemma}
\label{Typical points for continuous functions and SRB measures}
    Let $\mu$ be a SRB-measure, then there exists a $\varphi_t-$invariant set $M_0$, with $\mu(M_0)=1$, such that for any $f\in C^0(M)$ we have: if $x\in M_0$, then $f^+(x)=f^+(w)$, for any $w\in W^s(x)$ and $m_x^u-$almost every $w\in W^u(x)$.
\end{lemma}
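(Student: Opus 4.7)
The plan is to mirror the proof of the previous lemma, with the two substitutions that the author flags: the smooth measure $m$ is replaced by the SRB measure $\mu$, and the Lebesgue density point is replaced by a point in the support of the restricted measure $\mu|_A$, a substitution made legitimate by the SRB hypothesis that the conditionals $\mu_x^u$ along unstable manifolds are absolutely continuous.

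I would first apply Birkhoff's ergodic theorem (valid for the $\varphi_t$-invariant $\mu$) to $f \in C^0(M)$ and define $S_0=\{x \in M : f^+(x), f^-(x) \text{ exist and are equal}\}$, with $\mu(S_0)=1$. The main step is to show that for $\mu$-a.e.\ $x \in S_0$, $\mu_x^u$-almost every $\xi \in W_{loc}^u(x)$ also lies in $S_0$. Arguing by contradiction, suppose $A=\{x \in S_0 : \mu_x^u(W_{loc}^u(x)\setminus S_0)>0\}$ has $\mu(A)>0$, and let $B_x \subset W_{loc}^u(x) \setminus S_0$ denote the associated bad subsets of positive $\mu_x^u$-measure. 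Instead of selecting a Lebesgue density point, I would pick $y \in \mathrm{supp}(\mu|_A)$, which is nonempty and satisfies $\mu(A \cap B_\varepsilon(y))>0$ for every $\varepsilon>0$. In a Pesin foliation chart around $y$ with local product structure, disintegrate $\mu|_{B_\varepsilon(y)} = \int_T \mu_x^u\, d\mu_T(x)$ along local unstable leaves, with $T$ a transversal through $y$. Positivity of $\mu(A \cap B_\varepsilon(y))$ forces a $\mu_T$-positive set $T' \subset T$ on which the corresponding local leaf meets $A$, and hence carries a bad set $B_x$ of positive $\mu_x^u$-measure. The disintegration then gives
\[
\mu\Big(\bigcup_{x \in T'}(B_x \cap B_\varepsilon(y))\Big) = \int_{T'}\mu_x^u(B_x \cap B_\varepsilon(y))\, d\mu_T(x) > 0
\]
for $\varepsilon$ sufficiently small, contradicting $\mu(M\setminus S_0)=0$ and establishing the unstable claim.

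The stable part follows exactly as in Lemma \ref{Typical points for continuous functions}: uniform contraction on $W^s(x)$ together with continuity of $f$ forces $\tfrac{1}{T}\int_0^T |f(\varphi_t(x))-f(\varphi_t(w))|\,dt \to 0$, so $f^+(w)$ exists and equals $f^+(x)$ whenever $x \in S_0$ and $w \in W^s(x)$. Intersecting $S_0$ with the full-measure set produced by the unstable claim and invoking $\varphi_t$-invariance of $f^\pm$ yields the desired $M_0$.

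The main obstacle, relative to the smooth-measure case, is that the transverse measure $\mu_T$ arising in the disintegration need not be absolutely continuous with respect to any Riemannian transverse measure, so no local transverse disk can be chosen to lie in $A$ via a density estimate. The point-in-support substitution resolves this issue: the SRB absolute continuity $\mu_x^u \ll \lambda_x^u$ is used only to guarantee the existence of a measurable partition subordinate to $W^u$ together with the attendant disintegration formula, while the positivity $\mu(A \cap B_\varepsilon(y))>0$ inherited from $y \in \mathrm{supp}(\mu|_A)$ is precisely what produces the $\mu_T$-positive transversal $T'$ on which the Fubini-type contradiction runs.
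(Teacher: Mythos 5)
Your proposal is correct and follows precisely the route the paper indicates: mirror the proof of Lemma~\ref{Typical points for continuous functions} with the Lebesgue density point replaced by a point in $\mathrm{supp}(\mu|_A)$, running the Fubini contradiction through the unstable disintegration of $\mu$ on a Pesin chart rather than through a transverse disk contained in $A$. One small clarification worth recording: what your argument (and the paper's) actually yields is $\mu_x^u$-fullness of the typical set on unstable leaves, which is how the lemma's ``$m_x^u$'' should be read, since for a general SRB measure $\mu_x^u \ll \lambda_x^u$ does not upgrade this to $\lambda_x^u$-fullness; relatedly, the measurable partition subordinate to $W^u$ and its disintegration exist by Rokhlin for any hyperbolic measure, so the SRB absolute continuity is not what the lemma itself consumes.
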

\begin{lemma}
\label{Typical points for L1 functions and SRB measures}
   For any $f \in L^1(M;\mathbb{R})$ there exists a $\varphi_t-$invariant set $\mathcal{T}_f$, with $\mu(\mathcal{T}_f)=1$, that satisfies: for all $x \in \mathcal{T}_f$ we have $f^+(z)=f^+(x)$ for $m_x^s-$almost every $z \in W^s(x)$ and $m_x^u-$almost every $z \in W^u(x)$.  
\end{lemma}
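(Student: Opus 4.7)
The plan is to follow the template of the analogous $L^1$ statement for smooth measures, replacing $m$ with the SRB measure $\mu$ throughout and using the preceding lemma on typical points for continuous functions and SRB measures in place of its smooth-measure counterpart.

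First, given $f \in L^1(\mu)$, approximate $f$ by a sequence $(f_n)_n \subset C^0(M)$ with $\|f_n - f\|_{L^1(\mu)} \to 0$; such a sequence exists because $C^0(M)$ is dense in $L^1(\mu)$. The contractive property of the Birkhoff average gives $\|f_n^+ - f^+\|_{L^1(\mu)} \le \|f_n - f\|_{L^1(\mu)}$, so $f_n^+ \to f^+$ in $L^1(\mu)$. Extract a subsequence $(f_{n_k})_k$ with $f_{n_k}^+ \to f^+$ pointwise $\mu$-a.e. Set
\[
\mathcal{T}_0 = \{x \in M : f_{n_k}^+(x) \to f^+(x)\},
\]
which is $\varphi_t$-invariant and satisfies $\mu(\mathcal{T}_0) = 1$, and let $\mathcal{T}_f = \mathcal{T}_0 \cap M_0$, where $M_0$ is the $\mu$-full set supplied by the preceding SRB-continuous lemma applied simultaneously to the countable family $\{f_{n_k}\}_k$ (take a countable intersection of the resulting sets).

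The measure-theoretic heart of the argument is the claim that $\mathcal{T}_0 \cap W^\tau(x)$ has full $m_x^\tau$-measure in $W^\tau(x)$, for $\tau = s, u$ and $\mu$-a.e.\ $x$. Interpreting $m_x^\tau$ as the canonical system of conditional measures of $\mu$ on a measurable partition subordinate to $W^\tau$, this follows from the disintegration identity
\[
1 = \mu(\mathcal{T}_0) = \int m_x^\tau(\mathcal{T}_0 \cap W^\tau(x)) \, d\mu_T(x),
\]
which forces the integrand to equal $1$ on a $\mu_T$-full set. Shrink $\mathcal{T}_f$ to enforce this pointwise. For $x \in \mathcal{T}_f$ and $w \in \mathcal{T}_0 \cap W^s(x)$, the preceding lemma applied to each $f_{n_k}$ gives $f_{n_k}^+(w) = f_{n_k}^+(x)$; letting $k \to \infty$ and using $w, x \in \mathcal{T}_0$ yields $f^+(w) = f^+(x)$. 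On the unstable side, the same lemma gives $f_{n_k}^+(w) = f_{n_k}^+(x)$ for $m_x^u$-a.e.\ $w \in W^u(x)$, and the countable intersection over $k$ of these $m_x^u$-full sets, further intersected with $\mathcal{T}_0 \cap W^u(x)$, is still $m_x^u$-full and the same squeeze produces $f^+(w) = f^+(x)$.

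The step I expect to be most delicate is the correct interpretation of the notation $m_x^s$ in the SRB setting. Unlike for the smooth measure $m$, the conditional of an SRB measure along the stable foliation is in general not absolutely continuous with respect to the Riemannian measure $\lambda_x^s$ on $W^s(x)$, so $m_x^s$ must be read as the conditional measure of $\mu$ on $W^s(x)$ rather than as $\lambda_x^s$ for the disintegration step to apply verbatim. This matches the author's remark that the proof carries over to SRB measures by using a point in the support of $\mu$ in place of a Lebesgue density point.
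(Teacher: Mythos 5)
Your proof is correct and follows essentially the same template the paper intends: the paper gives no explicit proof of this lemma, stating only that Lemmas~3.3 and~3.4 "can be proved analogously" to Lemmas~3.1 and~3.2 "by using a point in the support of $\mu$ instead of a Lebesgue density point." Your argument — approximate in $L^1(\mu)$, pass to a pointwise-a.e.\ convergent subsequence of Birkhoff averages, intersect $\mathcal{T}_0$ with the full-measure set from the SRB version of the continuous-function lemma, and then transport the identity along (un)stable leaves via the disintegration — is precisely the spelled-out version of that remark, so I would classify it as the same route rather than a genuinely different one.

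One remark is worth making on the point you yourself flag as the most delicate. You are right to be suspicious of $m_x^s$: under the paper's own Definition~2.5, an SRB measure has absolutely continuous conditionals only along \emph{unstable} manifolds, so the paper's parenthetical claim that "its conditional measures are absolutely continuous along stable and unstable manifolds" is not backed by the definition given. Your resolution — read $m_x^\tau$ as the conditional measure of $\mu$ on a measurable partition subordinate to $W^\tau$, so that the disintegration identity
\[
1 = \mu(\mathcal{T}_0) = \int m_x^\tau\bigl(\mathcal{T}_0 \cap W^\tau(x)\bigr)\, d\mu_T(x)
\]
is valid and forces full conditional measure for $\mu$-a.e.\ $x$ — is the reading that makes the lemma both true and usable downstream; note that on the unstable side one then recovers the Riemannian-measure statement because for an SRB measure the density $d\mu_x^u/d\lambda_x^u$ is bounded away from $0$, so $\mu_x^u$ and $\lambda_x^u$ are equivalent, while on the stable side no such passage is available and one must stay with the conditional. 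This is consistent with how the lemma is invoked in Section~7, where the paper passes from a $\mu_x^u$-full set to an $m_x^u$-full set via this same equivalence. Your explicit handling of this point is a genuine improvement in rigor over the paper's one-line justification.

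One very minor simplification: you can drop the "countable intersection over $k$" when forming $M_0$, since the statement of the continuous-function lemma (in both its smooth and SRB versions) already supplies a single $\mu$-full, $\varphi_t$-invariant set $M_0$ that works simultaneously for every $f\in C^0(M)$, so no further intersection is needed. This does not affect correctness.
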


\section{Proof of Theorem \ref{criteria for ergodicity}}
\label{proof of theorem criteria}
We are going to split the proof of Theorem \ref{criteria for ergodicity} into two parts: first we prove that $\varphi|_{\Lambda(\gamma)}$ is an ergodic flow, whichs proof is simpler and illustrates well the idea to prove the first statement $\Lambda^u(\gamma) \circeq \Lambda^s(\gamma)$. 

The general idea is the following: for the second statement we need to prove that Birkhoff averages of continuous functions are almost everywhere constant on the ergodic homoclinic classes. To this end, given two points $x$ and $y$ homophonically related to $\gamma$ we are going to use this relation to transfer the set of typical points from $W^u(y)$ to $W^u(x)$ via stable holonomy. The idea behind the first statement is similar but, we need to deal with Birkhoff averages from just measurable functions.

\subsection{Proof of the second statement}
\noindent For the sake of simplicity, let us fix some reference point $p\in\gamma$. Remember that we are considering the case where $\gamma$ is not a singular orbit, otherwise the results does not hold as stated in Remark \ref{orbit not singular}. 

Define  $\Delta=\Lambda(\gamma)\cap M_0$, where $M_0$ is given by Lemma \ref{Typical points for continuous functions} and $\mathcal{R}$ is the set of regular points. Let $f\in C^0(M)$ and let us prove that $f^+$ is constant on $\Delta$. For each Pesin block $\mathcal{R}_{\ve,l}^{i,j}$ intersecting $\Delta$ in a positive measure set, denote by $\Delta_{\ve,l}^{i,j}$. Inside the Pesin Block the points have uniform size of local stable and unstable Pesin's Manifold, say $\delta>0$. By the Poincaré Recurrence Theorem we can find take $x, y \in \Delta_{\ve,l}^{i,j}$ which return infinitely many times to $\Delta_{\ve,l}^{i,j}$, i.e. there exist sequences $(t_k)_k$ and $(s_l)_l$ such that

\begin{enumerate}
    \item $t_k \rightarrow -\infty$, $x_k=\varphi_{t_k}(x) \in \Delta_{\ve,l}^{i,j}$.
    \item $s_l \rightarrow \infty$, $y_l = \varphi_{s_l}(y) \in \Delta_{\ve,l}^{i,j}$.
\end{enumerate}

For all $q \in M_0$ we can define the $m_q^u-$full measure sets

\[
A_{q}=\{\xi \in W^u(q):f^+(\xi)=f^+(q)\} \subset M_0
\]

\noindent Since $W^s(y_l)\pitchfork W^u(\gamma)$, we can find $l>>1$ big enough such that $d(y_l,W^u(\gamma))<\frac{\delta}{4}$. Because $y_l\in \Lambda(\gamma)$, then $y_l$ is a hyperbolic point with dimensions of stable and unstable equal to those of $p\in \gamma$ and $W^{ws}(y_l)\pitchfork W^u(\varphi_T(p))$ (in a single point), where $\varphi_T(p)$ is the point in $\gamma$ for which we have $W^s(y_l)\cap W^u(\varphi_T(p))$. Therefore, since $y_l$ is in a Pesin Block, we can define the weak-stable holonomy in a positive $m_{y_l}^u-$measure set which intersects $A_{y_l}\cap W_{loc}^u(y_l)$ is a positive $m_{y_l}^u-$measure set. Since $f^+$ is constant along weak-stable manifolds and by absolute continuity, then the image of this intersection by the weak-stable holonomy is a set of $m_{\varphi_T(p)}-$positive measure with the same Birkhoff average as $y_l$. Call this set $B_{\varphi_T(p)}$ and notice that by $s-$saturation and invariance, we get $B_{\varphi_T(p)}\subset M_0$.

\noindent Proceeding analogously with $x_k$ we produce a set $B_{\varphi_{T'}(p)}\subset M_0\cap W^u(\varphi_{T'}(p))$ with $m_{\varphi_{T'}(p)}^u-$positive measure and same Birkhoff average as $x_k$. 

\noindent Now, since $\gamma$ is periodic we take $\hat{T}>0$ such that $\varphi_{T+\hat{T}}(p)=\varphi_{T'}(p)$, then $\varphi_{\hat{T}}(B_{\varphi_T(p)})$ is a set with same Birkhoff average as $y_l$ of $m_{\varphi_{T'}}^u-$positive measure. Finally, $B_{\varphi_{T'}(p)}\subset M_0$ and then there exist points in $\varphi_{\hat{T}}(B_{\varphi_T(p)})$ with the same Birkhoff average as $x_k$. We conclude that $f^+(x)=f^+(x_{k})=f^+(y_{l})=f^+(y)$. Thus the restriction of $f^+$ to $\Delta$ is constant.  
\begin{figure}[h!]
\begin{overpic}[width=0.8\textwidth,tics=10]{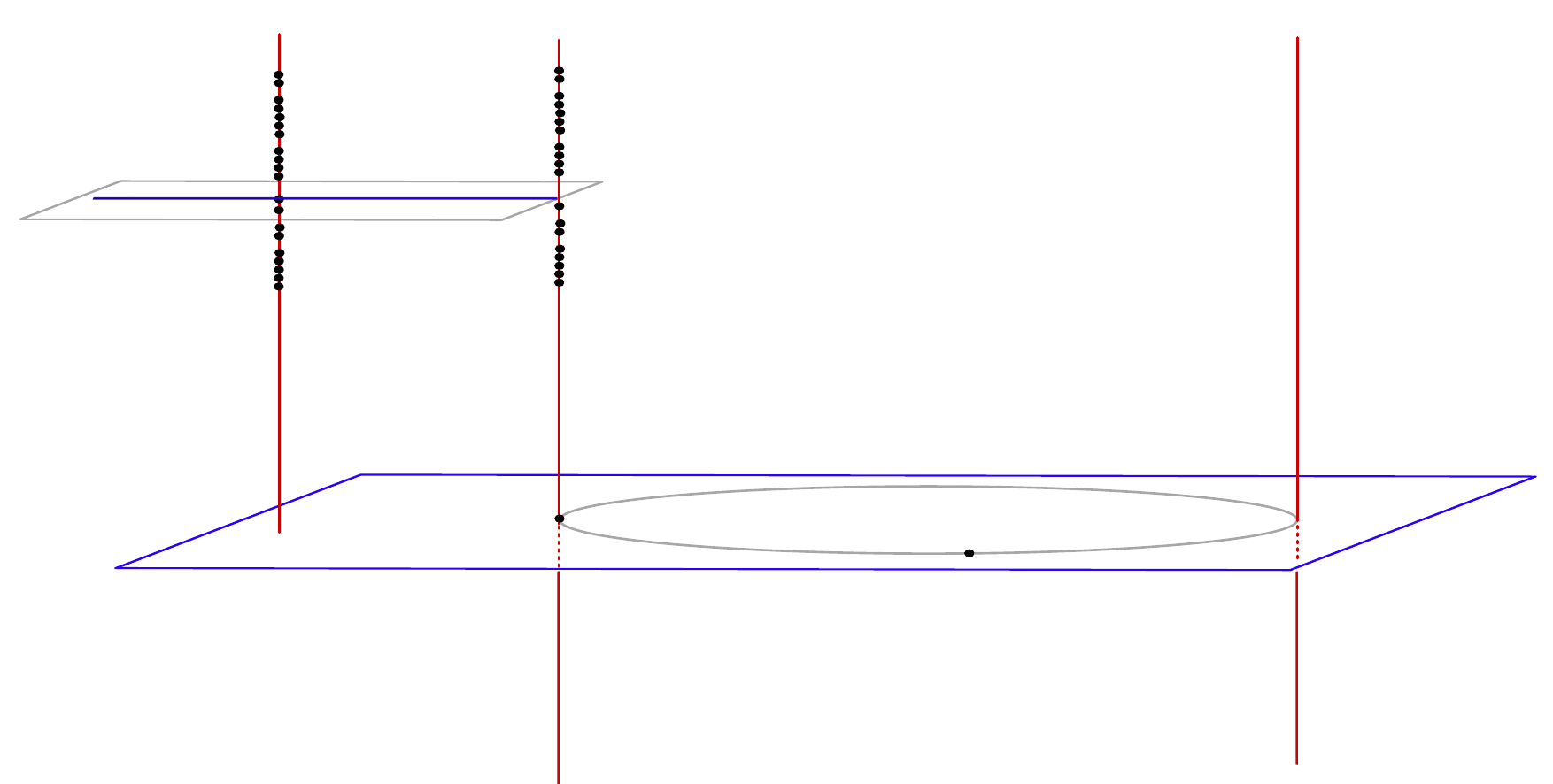}
\put(18,39){ $y_l$} 
\put(63,16){$p$}
\put(85,18){\textcolor{gray}{$\gamma$}}
\put(85,45){\textcolor{red}{$W^u(\gamma)$}}
\put(95,15){\textcolor{blue}{$W^s(\gamma)$}}
\put(9,25){\textcolor{red}{$W^u(y_l)$}}
\put(0,45){$A_{y_l}\cap W_{loc}^u(y_l)$}
\put(38,45){$B_{\varphi_T(p)}$}
\put(28,16){$\varphi_T(p)$}
\put(39,39){\textcolor{gray}{$W^{ws}(y_l)$}}
\end{overpic}
\centering
\caption{Construction of the set $B_{\varphi_T(p)}$}

\end{figure}

\subsection{Proof of the first Statement}

\noindent Consider the integrable function $f=1_{\Lambda^s(\gamma)}$ and the full-measure set of typical points $\mathcal{T}_f$ given by Lemma \ref{Typical points for integrable functions}. Define the $\varphi_t-$invariant set $\Delta^u = \Lambda^u(\gamma)\cap \mathcal{T}_f$. We will see that $\Delta^u \subset \Lambda^s(\gamma).$ 

\noindent Let $x \in \Delta^u$, then $W^u(x) \pitchfork W^s(\gamma) \neq \emptyset$. The strategy is to use an auxiliary point to construct a subset of $W^{wu}(x)$ intersecting $\Lambda^s(\gamma)$ with $m^{wu}_x-$positive measure. 

\noindent Let $\mathcal{R}_{\ve,l}^{i,j}$ be a Pesin Block for which the set $\Delta^s:=\Lambda^s(\gamma)\cap \mathcal{T}_f\cap R_{\varepsilon, l}^{i,j}$ has positive measure and consider an auxiliar point $y \in \Delta^s$ such that there is a sequence of times $(s_l)_l$ converging to infinity satisfying $y_l:=\varphi_{s_l}(y)\in \Delta^s$ and all $y_l$ are density points. Choose a point $z$ the point in $W^s(y)\pitchfork W^u(\gamma)$ and consider $z_l=\varphi_{s_l}(z)\in W^s(y)\pitchfork W^u(\gamma)$. Inside $R_{\varepsilon, l}^{i,j}$ the Pesin's manifolds have uniform lower bound for the diameter, say $\mathrm{diam}(W_{loc}^s(q))\geq \delta>0$, for all $q \in R_{\varepsilon, l}^{i,j}$. By definition we have that $d(z_l,y_l)\rightarrow 0$, as $l\rightarrow \infty$ so we can choose $z_0=\varphi_{s_{l_0}}(z)$ such that $d(z_0,y_0)<\frac{\delta}{2}$, where $y_0=\varphi_{s_{l_0}}(y)$. 

\noindent Since $y_0$ is a Lesbegue density point of $\Delta^s$, we can find a small ball $B$ centered in $y_0$ satisfying the condition $m(\Delta^s \cap B)>0$ and we can assume its diameter is equal to $\delta$. Let $\mathcal{F}$ be a smooth foliation of $B$ with dimension $n-\mathrm{dim} (W^s(y_0))$ such that each leaf $\mathcal{F}(\xi)$ is transverse to $W_{loc}^s(y_0)$. By the Fubini's property
\[
0<m(\Delta^s\cap B)=\int_{W_{loc}^s(y_0)}m_\xi^{\mathcal{F}}(\mathcal{F}(\xi) \cap \Delta^s)dm_{y_0}^s(\xi),
\]
we have that $m_\xi^{\mathcal{F}}(\mathcal{F}(\xi) \cap \Delta^s)>0$ on a subset of $W_{loc}^s(y_0)$ of $m_y^s-$positive measure. Fix $\xi_0$ such that $m_{\xi_0}^{\mathcal{F}}(\mathcal{F}(\xi_0)\cap \Delta^s)>0$.

\noindent We now prove that for some iterate of $x$ we get a transversal intersection between $W^{wu}(\varphi_T(x))$ and $W_{loc}^s(y_0)$. To do that, we want to see that we can approximate $W^u(\gamma)$ by iterating $W^u(x)$. In fact, since $W^u(x)\pitchfork W^s(\gamma)$, we divide into two cases. First, if $\mathrm{dim}W^u(x)+\mathrm{dim}W^s(\gamma)=n$, we can call $\{w\}=W^u(x)\cap W^s(\gamma)$ and consider a small saturation in the flow direction
\[
D:=\bigcup_{t\in(-\ve,\ve)}W^u(\varphi_t(x))
\]
Now, $D$ is a small disk transverse to $W^s(q)$, for some $q\in \gamma$ and we can apply the $\lambda-$lemma for the diffeomorphism $\varphi_T$, where $T$ is the period of $\gamma$. This implies that $\varphi_{nT}(D)$ converges in the $C^1-$topology to
\[
D'=\bigcup_{t\in(-\ve,\ve)}W^u(q).
\]
By taking big enough iterates we can make $\varphi_{nT}(D)$ close enough to $D'$ such that after flowing by $\varphi_t$ for some $0\leq t\leq T$ it "enters" $B$ and since $W_{loc}^s(y_0)\pitchfork W^u(\gamma)$ it must intersect $W_{loc}^s(y_0)$ transversally. The case where $\mathrm{dim}W^u(x)+\mathrm{dim}W^s(\gamma)>n$ is easier to deal with, because we already have that $W^u(x)\pitchfork W^s(q)$ for some $q\in \gamma$, so we just  apply the $\lambda-$lemma for $\varphi_{nT}$ and $W^u(x)$ directly to get that $W_{loc}^s(y_0)\pitchfork W^u(\varphi_t(x))$. So, let us fix that $x_t:=\varphi_t(x)$ satisfies $W_{loc}^s(y_0)\pitchfork W^{wu}(x_t)$ or $W_{loc}^s(y_0)\pitchfork W^{u}(x_t)$

\noindent To proceed we need to split each case into two more cases but the analysis is very similar. In the first case we deal with $\mathrm{dim}(W^s(y_0))+\mathrm{dim}(W^{wu}(x_t))=n$ (see Figure \ref{case with good dimensions}) and we consider the holonomy map $h$ between a subset of positive measure in $\mathcal{F}(\xi_0)$ and $W^{wu}(x_t)$, then $h$ sends the positive measure set $\mathcal{F}(\xi_0)\cap \Delta^s$ into a set of $m^{wu}_{x_t}-$positive measure. Since $\Lambda^s(\gamma)$ is $s-$saturated, the last set lies inside $\Lambda^s(\gamma)$. The second case we need to consider occurs when $\mathrm{dim}(W^s(y_0))+\mathrm{dim}(W^{wu}(x_t))>n$. In this case, we can assume with no lost of generality that we had choose the foliation $\mathcal{F}$ to be such that each leaf containing a point of $W^{wu}(x_t)$ is contained in $W^{wu}(x_t)$. Consider $S$ to be an open submanifold inside $W^s(y_0)\cap W^{wu}(x_t)$. Integrating over $S$ and using the holonomy maps from $\mathcal{F}(\xi_0)$ to $\mathcal{F}(q)$, for each $q \in S$, we construct a set $A$ intersecting $\Lambda^s(\gamma)$ with $m^{wu}_{x_t}-$positive measure. In any of those cases, using that $\Lambda^s(\gamma)$ is $\varphi_t-$invariant, we found a set of $m^{u}_{x_t}-$positive measure inside $\Lambda^s(\gamma)$. It means that $W^{u}(x_t)$ contains a set of positive measure with Birkhoff average $f^+ \equiv 1$, but $x_t$ is a typical point, so $f^+(x)=f^+(x_0)=1$. We conclude that $x \in \Lambda^s(\gamma)$. The analysis for the case $W_{loc}^s(y_0)\pitchfork W^{u}(x_t)$ is exactly the same.

\noindent The proof of the reverse inclusion is analogous, so we conclude the proof of the theorem.

\begin{figure}[h!]
\begin{overpic}[width=0.8\textwidth,tics=10]{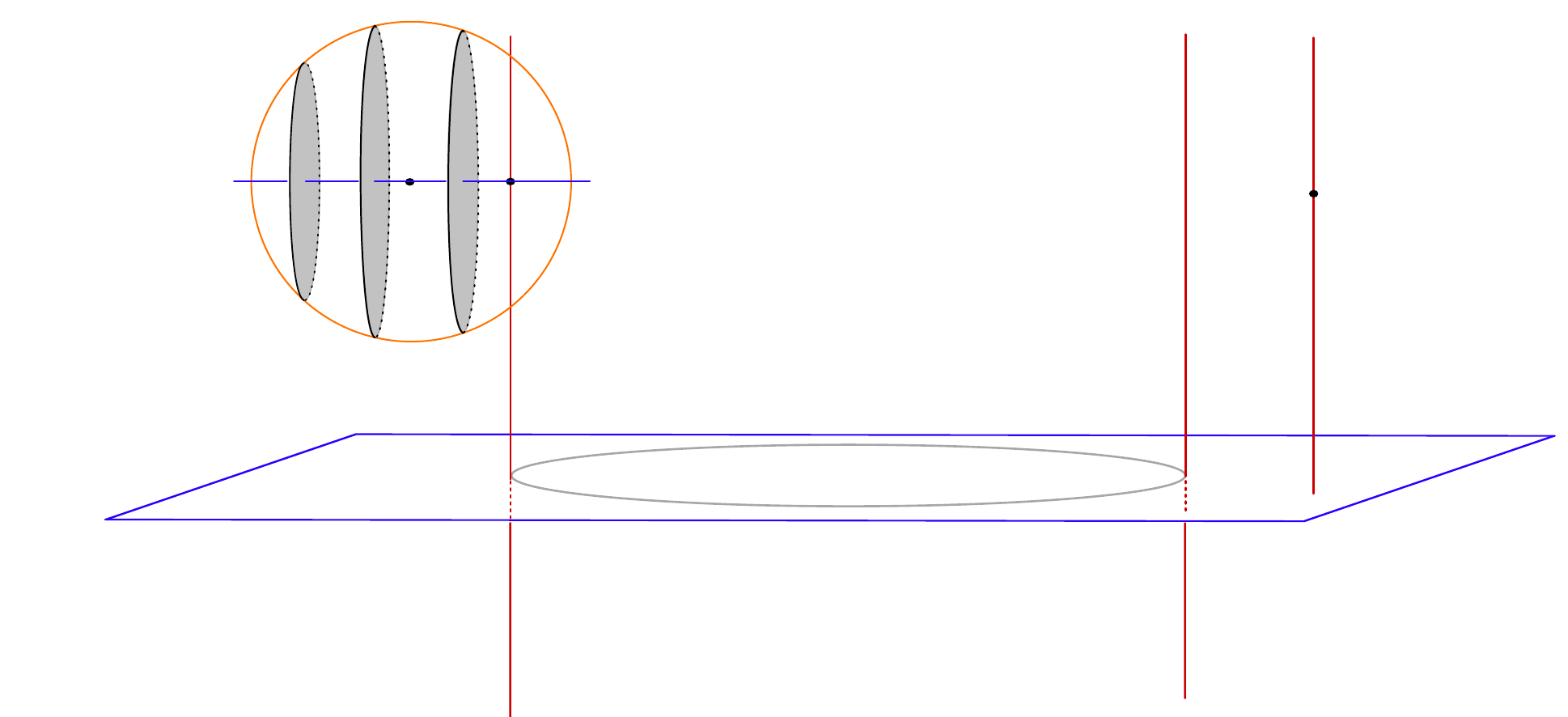}
\put(25,36){ $y_0$} 
\put(65,15){\textcolor{gray}{$\gamma$}}
\put(77,5){\textcolor{red}{$W^u(\gamma)$}}
\put(95,14){\textcolor{blue}{$W^s(\gamma)$}}
\put(4,33){\textcolor{blue}{$W_{loc}^s(y_0)$}}
\put(85,33){$x$}
\put(14,41){\textcolor{orange}{$B$}}
\put(21,21){\textcolor{darkgray}{$\mathcal{F}(\xi_0)$}}
\put(85,42){\textcolor{red}{$W^u(x)$}}
\end{overpic}
\centering
\caption{Case with $\mathrm{dim}W^s(y_0)+\mathrm{dim}W^u(\gamma)=n$ and $\mathrm{dim}W^u(x)+\mathrm{dim}W^s(\gamma)=n$.}
\label{case with good dimensions}
\end{figure}

\section{Proof of Theorem \ref{homoclinic class for SRB}}
\label{proof ergodicity SRB}
\noindent The proof will be analogous to the proof of the first statement of Theorem \ref{criteria for ergodicity}. Consider $f=1_{\Lambda^s(\gamma)}$ and let $\mathcal{T}_f$ be as in Lemma \ref{Typical points for L1 functions and SRB measures} and define $\Delta^u:=\Lambda^u(\gamma)\cap\mathcal{T}_f$. We are going to show that $\Delta^u\subset \Lambda^s(\gamma)$. To do that, let us prove that for every $x\in \Delta^u$ we get $f^+(x)=1$. For now on fix $x\in \Delta^u$. The strategy is the same as before, we will consider an auxiliary point $y$ to construct some local foliation of a small ball such that we find a leaf with intersection of positive measure with $\Lambda^s(\gamma)$, then we transfer this information to the unstable manifold of $x$ also via holonomy. The only difference here is the choice of the point $y$. Insted of a Lesbegue density point we choose $y$ to be a point in the support of the measure. The rest of the argument follows identically. 

\noindent Let $R_{\varepsilon,l}^{ij}$ be a Pesin block such that $\Delta^s:=R_{\varepsilon,l}^{ij}\cap \Lambda^s(\gamma)$ has positive measure and  $y\in \Delta^s$ be such that there exists a sequence $(t_k)_k$ converging to infinity such that $y_k=\varphi_{t_k}(y)\in \Delta^s$ and $y_k$ belongs to the support of $\mu$ restricted to $\Lambda^s(\gamma)\cap R_{\varepsilon,l}^{ij}$. Again, for points in $R_{\varepsilon,l}^{ij}$ their Pesin manifolds have a diameter bigger than a uniform constant $\delta>0$. As before, let $y_0=\varphi_{t_{k_0}}(y)$ be such that $d(y_0,W^u(\gamma))<\frac{\delta}{2}$.
Since $y_0$ is the support of $\mu$, we can find a small ball $B$ centered in $y_0$ satisfying the condition $\mu(\Delta^s \cap B)>0$. Let $\mathcal{F}$ be a smooth foliation of $B$ with dimension $n-\mathrm{dim} (W^s(y_0))$ such that each leaf $\mathcal{F}(\xi)$ is transverse to $W_{loc}^s(y_0)$. By the Fubini's property
\[
0<\mu(\Delta^s\cap B)=\int_{W_{loc}^s(y_0)}\mu_\xi^{\mathcal{F}}(\mathcal{F}(\xi) \cap \Delta^s)d\mu_{y_0}^s(\xi),
\]
we have that $\mu_\xi^{\mathcal{F}}(\mathcal{F}(\xi) \cap \Delta^s)>0$ on a subset of $W_{loc}^s(y_0)$ of $\mu_y^s-$positive measure. Fix $\xi_0$ such that $\mu_{\xi_0}^{\mathcal{F}}(\mathcal{F}(\xi_0)\cap \Delta^s)>0$. Since $\mu$ is a hyperbolic measure and has absolutely continuous conditional measures, the proof follows from the same arguments. 
We conclude that $\mu$ is hyperbolic and ergodic SRB measure, then the  physical property follows from \cite{PUGH&SHUB89ergodic}.

\section{Proof of Theorem \ref{SRB ergodic components has full measure homoclinic class}}
\label{proof SRB ergodic componets has full measure}
\noindent The proof will be a consequence of Theorem \ref{Katok existence of periodic orbit} and the next Lemma:
\begin{lemma}
    If $\mu$ is a regular, hyperbolic and SRB measure, then almost every one of its ergodic components is regular, hyperbolic and SRB.
\end{lemma}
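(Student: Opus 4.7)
The plan is to use the ergodic decomposition $\mu = \int \nu\, d\pi(\nu)$ and treat the three properties (regular, hyperbolic, SRB) separately. The first two properties are straightforward, and the SRB property will be the main point, handled via the Ledrappier--Young characterization.

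For regularity, note that $\mathrm{Sing}(X)$ is a Borel set with $\mu(\mathrm{Sing}(X))=0$, and therefore
\[
0 = \mu(\mathrm{Sing}(X)) = \int \nu(\mathrm{Sing}(X))\, d\pi(\nu),
\]
so $\nu(\mathrm{Sing}(X))=0$ for $\pi$-a.e.\ $\nu$. For hyperbolicity I would use that by Oseledec's theorem the Lyapunov exponents and Oseledets subspaces depend only on the point, not on the invariant measure used. Consequently the set $Z$ of regular points admitting some zero Lyapunov exponent transverse to the flow direction is a Borel set, and $\mu(Z)=0$ by hypothesis. The same ergodic decomposition argument then gives $\nu(Z)=0$ for $\pi$-a.e.\ $\nu$, so almost every ergodic component is non-uniformly hyperbolic.

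The SRB property is the delicate step, and the cleanest route is to invoke the Ledrappier--Young characterization referenced in the preliminaries: an invariant measure $\nu$ satisfying $\nu(\mathrm{Sing}(X))=0$ with well-defined nonnegative Lyapunov exponents is SRB if and only if the Pesin entropy formula
\[
h_\nu(\varphi) = \int \sum_{\chi_i(x)>0} \chi_i(x)\dim E_i(x)\, d\nu(x)
\]
holds. Combining the affinity of metric entropy under ergodic decomposition, $h_\mu(\varphi)=\int h_\nu(\varphi)\, d\pi(\nu)$, with the linearity of the integral of the positive-exponent sum, and applying Pesin's formula to $\mu$, one obtains the identity
\[
\int h_\nu(\varphi)\, d\pi(\nu) = \int \Bigl(\int \sum_{\chi_i>0}\chi_i\dim E_i\, d\nu\Bigr) d\pi(\nu).
\]
Ruelle's inequality gives the pointwise bound $h_\nu(\varphi) \leq \int \sum_{\chi_i>0}\chi_i\dim E_i\, d\nu$ for every $\nu$, so the integrands must agree $\pi$-almost everywhere. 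Thus Pesin's formula holds for $\pi$-a.e.\ ergodic component $\nu$, and Ledrappier--Young delivers the SRB property.

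The main obstacle to watch is ensuring that the flow versions of Oseledec, Ruelle's inequality, the affinity of entropy, and Ledrappier--Young are available with the precise formulation needed; these are classical but for flows one usually reduces to a time-$1$ map or a suspension, and some care is needed when the measure charges fixed points of the time-$1$ map (which is ruled out here by regularity). A minor subtlety is also the measurability of the map $\nu\mapsto h_\nu(\varphi)$, guaranteeing the integral decomposition of entropy, but this is part of the standard ergodic decomposition package.
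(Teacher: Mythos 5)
Your proof is correct and follows essentially the same route as the paper's: the ergodic decomposition argument for regularity, and then the combination of the Margulis--Ruelle inequality, the affinity of metric entropy under ergodic decomposition (Jacobs' theorem), and Pesin's formula for $\mu$ to force the non-positive integrand $h_\nu - \sum_{\chi_i>0}\chi_i\dim E_i$ to vanish $\pi$-a.e., with Ledrappier--Young converting this back into the SRB property. The only difference is that you spell out the hyperbolicity step explicitly, which the paper leaves implicit; this is a welcome but minor addition.
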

\begin{proof}
    By the Ergodic Decomposition Theorem, there exist a partition $\mathcal{P}$ of $M$, a system of ergodic probability measures $\{\mu_P: P\in \mathcal{P}\}$, which form a decomposition for $\mu$, and a measure $\hat{\mu}$ over $\mathcal{P}$. Since $\mu$ is regular, then
    \[
    0=\mu(Sing(X))=\int\mu_P(Sing(X))d\hat{\mu}(P).
    \]
    It follows that $\hat{\mu}-$almost every $\mu_P$ is regular. Now, we just need to check that $\hat{\mu}-$almost every $\mu_P$ satisfies Pesin's formula. Indeed, by the Margulis-Ruelle inequality and ergodicity we have
    \[
    h_{\mu_P}\leq \int \sum_{\lambda(x)>0}\lambda(x)d\mu_P(x)=\sum_{\lambda(\mu_P)>0}\lambda(\mu_P).
    \]
    On the other side, by Jacobs Theorem (cf. \cite{KerleyViana} Theorem 9.6.2) we get
    \[
    h_{\mu}=\int h_{\mu_P}d\hat{\mu}(P),
    \]
    and since $\mu$ is an SRB measure,
    \[
    h_{\mu}= \int\sum_{\lambda(x)>0}\lambda(x)d\mu=\int\left(\int \sum_{\lambda(x)>0}\lambda(x)d\mu_P\right)d\hat{\mu}(P)=\int\sum_{\lambda(\mu_P)>0}\lambda(\mu_P)d\hat{\mu}(P).
    \]
    It follows that 
    \[
    \int \underbrace{\left(h_{\mu_P}-\sum_{\lambda(\mu_P)>0}\lambda(\mu_P)\right)}_{\leq 0}d\hat{\mu}(P)=0,
    \]
    hence $\hat{\mu}-$almost every $\mu_P$ satisfies the Pesin's formula. 
\end{proof}
\noindent To conclude the proof of Theorem \ref{SRB ergodic components has full measure homoclinic class}, notice that by the previous Lemma we can assume that $\mu$ is an ergodic measure. Now Theorem \ref{Katok existence of periodic orbit} implies that there exists a periodic hyperbolic orbit  $\gamma$ such that $supp(\mu)\subset \overline{\Lambda(\gamma)}$ and, in particular, for $\mu-$almost  every $x$ satisfies $\varphi_t(W^u(x))$ accumulates on $W^u(\gamma)$ when $t$ goes to infinity and $\varphi_t(W^s(x))$ accumulates on $W^s(\gamma)$ as $t$ goes to minus infinity. The invariance of $\Lambda(\gamma)$ implies that $\mu(\Lambda(\gamma))=1$.

\section{Proof of Theorem \ref{SRB measures with full measure homoclinic class are equal}}
\label{proof SRB measures with full measure homoclinic class are equal}
\noindent The strategy here is to prove that in this situation, we must have intersection of the Basins of $\mu$ and $\nu$, thus ergodicity will imply that they must coincide. 

Let $B(\mu)$ and $B(\nu)$ be the basins of $\mu$ and $\nu$, respectively. Since both measures are ergodic, it implies that $\mu(B(\mu))=1$ and $\nu(B(\nu))=1$. By Birkohoff Ergodic Theorem we can define the following $\mu-$full measure and $\nu-$full measure sets, respectively, 
    \[
    B_\mu = \left\{x\in M: \lim_{T\rightarrow \pm\infty }\frac{1}{T}\int_{0}^T f(\varphi_{t}(x))dt=\int fd\mu, \forall f\in C^0(M) \right\}
    \]
    \[
    B_\nu = \left\{x\in M: \lim_{T\rightarrow \pm\infty }\frac{1}{T}\int_{0}^T f(\varphi_{ t}(x))dt=\int fd\nu, \forall f\in C^0(M) \right\}.
    \]
    By hypothesis we get $\mu(B_\mu\cap\Lambda(\gamma))=1$ and $\nu(B_\nu\cap\Lambda(\gamma))=1$. Apply Lemma \ref{Typical points for L1 functions and SRB measures} for $\mu$ and the function $f=1_{B_\mu\cap\Lambda(\gamma)}$ to find $x\in \mathcal{T}_f\cap B_\mu\cap\Lambda(\gamma)$ such that $\mu_x^u(B_\mu\cap\Lambda(\gamma))=1$. Since $\mu_x^u$ is an SRB measure, its absolute continuity property implies that $m_x^u(B_\mu\cap\Lambda(\gamma))=1$. Analogously, we find $y\in\mathcal{T}_{f'}\cap B_\nu\cap\Lambda(\gamma)$ such that $m_y^u(B_\nu\cap\Lambda(\gamma))=1$, where $f'=1_{B_\nu\cap \Lambda(\gamma)}$.
    \noindent The strategy now is to transfer these sets to the unstable manifold of the orbit $\gamma$. To be precise, define the following sets
    \[
    D_x=B_\mu\cap \Lambda(\gamma)\cap W^u(x),
    \]
    \[
    D_y=B_\nu\cap \Lambda(\gamma)\cap W^u(y).
    \]
    Fix the points $p,q\in\gamma$ such that $p$ and $q$ are the points satisfying $W^s(x)\cap W^u(p)\neq \emptyset$ and $W^s(x)\cap W^u(p)\neq \emptyset$. By the definition of $\Lambda(\gamma)$ and using the absolute continuity of weak-stable holonomies, for every point $z \in D_x$ we find a unique point $w\in W^u(p)$ such that $\{w\}=W^{ws}(z)\pitchfork W^u(p)$. Since $B_\mu\cap\Lambda(\gamma)$ is an invariant and $s-$saturated set, we get that $m_p^u(B_\mu\cap \Lambda(\gamma))=1$. Now, take $T\in\mathbb{R}$ such that $\varphi_T(q)=p$ and use the same argument to $D_{\varphi_T(y)}$ to get  $m_p^u(B_\nu\cap\Lambda(\gamma))=1$. This implies that $B(\mu)\cap B(\nu)\neq \emptyset$, hence $\mu=\nu$.

\textbf{Acknowledgments:} The authors thank Prof. Ali Tahzibi for several fruitful discussions and for suggesting the study of SRB measures in this context. The first author also thanks the hospitality of the mathematics department from Penn State University during his visit when part of this work was written, in particular he would like to thank Prof. Federico Rodriguez-Hertz for his valuable guidance, advices and several interesting discussions. 

\bibliographystyle{plain}
\bibliography{Referencias2}

@article{Rohlin52,
	Author = {Rohlin, V. A.},
	Fjournal = {American Mathematical Society Translations},
	Issn = {0065-9290},
	Journal = {Amer. Math. Soc. Translation},
	Mrclass = {27.2X},
	Mrnumber = {0047744},
	Number = {71},
	Pages = {55},
	Title = {On the fundamental ideas of measure theory},
	Volume = {1952},
	Year = {1952}}

@article{PUGH&SHUB89ergodic,
  title={Ergodic attractors},
  author={Pugh, C. and Shub, M.},
  journal={Transactions of the American Mathematical Society},
  volume={312},
  number={1},
  pages={1--54},
  year={1989}
}

@article{F,
	Author = {A. F. Filippov},
	Date-Added = {2015-11-05 19:45:13 +0000},
	Date-Modified = {2015-11-05 19:46:05 +0000},
	Journal = {Mathematics and Its Applications, Kluwer Academic Publishers, Dordrecht},
	Title = {Differential Equations with Discontinuous Righthand Side},
	Year = {1988}}

@book{KerleyViana,
  title={Foundations of ergodic theory},
  author={M. Viana and K. Oliveira},
  number={151},
series={Cambridge Studies in Advanced Mathematics},
  year={2016},
  publisher={Cambridge University Press}
}

@unpublished{NT,
	Author = {D. D. Novaes and M. A. Teixeira},
	Date-Added = {2015-07-16 14:50:39 +0000},
	Date-Modified = {2015-07-16 14:52:08 +0000},
	Note = {arXiv:1504.02425},
	Title = {Shilnikov problem in Filippov dynamical systems}}

@article{K,
	Author = {Y.Katznelson},
	Date-Added = {2012-01-18 18:43:13 -0200},
	Date-Modified = {2012-01-18 19:11:35 -0200},
	Journal = {Israel Journal of Mathematics},
	Number = {186-195},
	Title = {Ergodic Automorphisms of ${T}^n$ are {B}ernoulli Shifts},
	Volume = {10},
	Year = {1971}}

@article{B,
	Author = {M. Brin},
	Date-Added = {2012-01-07 16:57:23 -0200},
	Date-Modified = {2014-08-25 21:07:24 +0000},
	Journal = {Ergodic Theory and Dynamical Systems},
	Pages = {395-401},
	Title = {On dynamical coherence},
	Volume = {23},
	Year = {2003}}

@article{katok1980lyapunov,
  title={Lyapunov exponents, entropy and periodic orbits for diffeomorphisms},
  author={A. Katok},
  journal={Publications Math{\'e}matiques de l'IH{\'E}S},
  volume={51},
  pages={137--173},
  year={1980}
}

@article{HHTUcriteria,
author = {F. Rodriguez Hertz and M. A. Rodriguez Hertz and A. Tahzibi and R. Ures},
title = {{New criteria for ergodicity and nonuniform hyperbolicity}},
volume = {160},
journal = {Duke Mathematical Journal},
number = {3},
publisher = {Duke University Press},
pages = {599 -- 629},
year = {2011},
doi = {10.1215/00127094-1444314},
URL = {https://doi.org/10.1215/00127094-1444314}
}

@article{ledrappier1985metric,
  title={The metric entropy of diffeomorphisms: Part I: Characterization of measures satisfying Pesin's entropy formula},
  author={F. Ledrappier},
  journal={Annals of Mathematics},
  volume={122},
  number={3},
  pages={509--539},
  year={1985},
  publisher={JSTOR}
}

@article{fisher2022accessibility,
  title={Accessibility and centralizers for partially hyperbolic flows},
  author={T. Fisher and B. Hasselblatt},
  journal={Ergodic Theory and Dynamical Systems},
  volume={42},
  number={3},
  pages={835--854},
  year={2022},
  publisher={Cambridge University Press}
}

@article{avila2022symplectomorphisms,
  title={Symplectomorphisms with positive metric entropy},
  author={A. Avila and S. Crovisier and A. Wilkinson},
  journal={Proceedings of the London Mathematical Society},
  volume={124},
  number={5},
  pages={691--712},
  year={2022},
  publisher={Wiley Online Library}
}

@article{Avila2010regularization,
author = {A. Avila},
title = {{On the regularization of conservative maps}},
volume = {205},
journal = {Acta Mathematica},
number = {1},
publisher = {Institut Mittag-Leffler},
pages = {5 -- 18},
keywords = {generic properties, Lyapunov exponents, partial hyperbolicity, symplectic diffeomorphisms},
year = {2010},
doi = {10.1007/s11511-010-0050-y},
URL = {https://doi.org/10.1007/s11511-010-0050-y}
}

@book{BP2002,
  title={Lyapunov exponents and smooth ergodic theory},
  author={Barreira, L. and Pesin, Y.},
  volume={23},
  year={2002},
  publisher={American Mathematical Soc.}
}

@article{OSELDETS68,
  title={A multiplicative ergodic theorem. {C}haracteristic {L}japunov, exponents of dynamical systems},
  author={Oseledets, V. I.},
  journal={Trudy Moskovskogo Matematicheskogo Obshchestva},
  volume={19},
  pages={179--210},
  year={1968},
  publisher={Moscow Mathematical Society}
}

@article{LI&LIU24,
  title={A closing lemma for non-uniformly hyperbolic singular flows},
  author={Li, M. and Liang, C. and Liu, X.},
  journal={Communications in Mathematical Physics},
  volume={405},
  number={8},
  pages={195},
  year={2024},
  publisher={Springer}
}

@book{COUDENE16ergodic,
  title={Ergodic theory and dynamical systems},
  author={Coud{\`e}ne, Y.},
  year={2016},
  publisher={Springer}
}

@article{YOUNG02,
  title={What are {SRB} measures, and which dynamical systems have them?},
  author={Young, L.-S.},
  journal={Journal of Statistical Physics},
  volume={108},
  number={5},
  pages={733--754},
  year={2002},
  publisher={Springer}
}

\Addresses

\end{document}